\theoremstyle{plain}
\newtheorem{theorem}{Theorem}[section]
\newtheorem{proposition}[theorem]{Proposition}
\theoremstyle{definition}
\newtheorem{definition}[theorem]{Definition}
\newtheorem{corollary}[theorem]{Corollary}
\newtheorem{example}{\sc Example}
\theoremstyle{remark}
\newtheorem{remark}{\sc Remark}
\begin{document}

\date{}
\title{\textbf{$(\alpha, \beta)$-Fuzzy Subsemigroup
and $(\alpha,\beta)$-Fuzzy Bi-ideal in $\Gamma$- Semigroups}}
\author{ \textbf{Sujit Kumar Sardar$^1$}, \textbf{Bijan Davvaz$^2$},
\textbf{Samit Kumar Majumder$^3$}\\ and\\ \textbf{Soumitra Kayal$^{4}$}\\\\
$^{1,4}$Department of Mathematics, Jadavpur\\
University, Kolkata-700032, INDIA\\
$^2$Department of Mathematics, Yazd University,\\ Yazd, IRAN \\
$^3$Tarangapur N.K High School, Tarangapur,\\ Uttar Dinajpur, West Bengal-733129, INDIA\\
\texttt{$^1$sksardarjumath@gmail.com}\\
\texttt{$^2$davvaz@yazduni.ac.ir}\\
\texttt{$^3$samitfuzzy@gmail.com}\\
\texttt{$^4$soumitrakayal.ju@gmail.com} } \maketitle

\begin{abstract}
In this paper, using the idea of quasi-coincidence of a fuzzy point with a fuzzy set, the concepts of $(\in ,\in \vee q)$- fuzzy subsemigroup and $(\in ,\in \vee q)$- fuzzy bi-ideal in a $\Gamma$-semigroup have been introduced and some related properties have been investigated.

\textbf{AMS Mathematics Subject Classification[2000]:}\textit{\ }08A72,
20M12, 3F55

\textbf{Key Words and Phrases:}\textit{\ }$\Gamma $-semigroup, belong to or quasi-coincident, $(\in ,\in\vee q)$-fuzzy subsemigroup, $(\in ,\in \vee q)$-fuzzy bi-ideals, $(\alpha,\beta)$-fuzzy subsemigroup, $(\alpha,\beta)$-fuzzy bi-ideal.
\end{abstract}


\section{Introduction}

The concept of fuzzy set was introduced by Zadeh\cite{Z}. Since then many researchers explored on the generalizations of fuzzy sets. Many papers on fuzzy sets appeared showing the importance of the concept and its application to logic, set theory, group theory, semigroup theory, real analysis, measure theory, topology etc. It was first applied to the theory of groups by A. Rosenfeld\cite{R}. In \cite{Yu}, Yuan et al. introduced the definition of fuzzy subgroup with thresholds which is a generalization of Rosenfeld's fuzzy subgroup and Bhakat and Das's fuzzy subgroup. Murali\cite{Mu} proposed a definition of fuzzy point belonging to fuzzy subset under a natural equivalence on fuzzy subset. the idea of quasi coincidence of a fuzzy point with a fuzzy set, which is mentioned in \cite{Pu}, played a vital role to generate some different types of fuzzy subgroups. Bhakat and Das\cite{Bh1, Bh2} gave the concept of $(\alpha,\beta)$-fuzzy subgroups by using the {\bf belong to} relation $(\in)$ and {\bf quasi-coincidence with} relation $(q)$ between a fuzzy point and a fuzzy subgroup, and introduced the concept of an $(\in ,\in \vee q)$- fuzzy subgroup. In particular, $(\in,\in\vee q)$- fuzzy subgroup is an important and useful generalization of Rosenfeld's fuzzy subgroup. We see the
fuzzification of different concepts of semigroups in \cite{K1,K2,K3}. Yunqiang Yin and Dehua Xu\cite{Yu1} introduced the concepts of $(\in,\in \vee q)$- fuzzy subgroup and $(\in,\in \vee q)$- fuzzy ideals in semigroups. In \cite{J}, Y.B. Jun, S.Z. Song introduced the notion of generalized fuzzy interior ideals in semigroups. Sen and Saha in \cite{SS} defined the concepts of $\Gamma$-semigroups as a generalization of semigroups. $\Gamma$-semigroups have been analyzed by a lot of mathematicians, for instance Chattopadhay\cite{Ch1,Ch2,SC}, Dutta and Adhikari\cite{D1,D3}, Hila\cite{H1,H2}, Chinram\cite{Chin}, Saha\cite{S,SSa}, Seth\cite{Se}. Sardar and Majumder\cite{S1,S3,S5} characterized subsemigroups, bi-ideals, interior ideals(along with B.Davvaz\cite{S4}), quasi ideals, ideals, prime(along with D. Mandal\cite{S2}) and semiprime ideals, ideal extensions(along with T.K. Dutta\cite{D2,D4}) of a $\Gamma$-semigroup in terms of fuzzy subsets. They also studied their different properties directly and via operator semigroups of a $\Gamma$-semigroup. As a first step in this direction, here the authors are going to introduce the concept of $(\in,\in \vee q)$- fuzzy subsemigroup and $(\in,\in \vee q)$- fuzzy bi-ideal in a $\Gamma$-semigroup.


\section{Preliminaries}

In this section we discuss some elementary definitions which will be used in the sequel.

Let $S=\{x,y,z,....\}$ and $\Gamma=\{\alpha,\beta,\gamma,....\}$ be two non-empty sets. Then $S$ is called a $\Gamma$-{\it semigroup}$\cite{SS}$ if there exist a mapping $S\times\Gamma\times S\rightarrow S($images to be denoted by $a\alpha b)$ satisfying

\begin{itemize}
\item[(1)] $x\gamma y\in S$,

\item[(2)] $(x\beta y)\gamma z=x\beta (y\gamma z)$, for all $x,y,z\in S$ and for all $\beta,\gamma\in\Gamma.$
\end{itemize}

A non-empty subset $A$ of a $\Gamma $-semigroup $S$ is called a {\it subsemigroup}\cite{S5} of $S$ if $A\Gamma A\subseteq A.$ A subsemigroup $A$ of a $\Gamma $-semigroup $S$ is called a {\it bi-ideal}\cite{S5} of $S$ if $A\Gamma S\Gamma A\subseteq A.$

A function $\mu$ from a non-empty set $X$ to the unit interval $[0,1]$ is called a {\it fuzzy subset}\cite{Z} of $X.$

A $\Gamma$-semigroup $S$ is called {\it regular}\cite{D1}, if for each $a\in S,$ there exist $x\in S$ and $\alpha,\beta\in\Gamma$ such that $a=a\alpha x\beta a.$

A $\Gamma$-semigroup $S$ is called {\it intra-regular}\cite{D1}, if for each $a\in S,$ there exists $x,y\in S$ and
$\alpha,\beta,\gamma\in\Gamma$ such that $a=x\alpha a\beta a\gamma y.$

A $\Gamma$-semigroup $S$ is called left$($right$)$ duo if every left$($resp. right$)$ ideal of $S$ is a two sided ideal of $S.$

A $\Gamma$-semigroup $S$ is called duo if it is left and right duo.

\begin{example}
\cite{S4} Let $\Gamma=\{5,7\}.$ For any $x,y\in N$ and $\gamma\in\Gamma ,$ define $x\gamma y=x.\gamma .y$ where $.$ is the usual multiplication on $N$. Then $N$ is a $\Gamma$-semigroup.
\end{example}

\begin{definition}
Let $S$ be a $\Gamma$-semigroup. For a fuzzy subset $\mu$ of $S$ and $t\in (0,1],$ the set $U(A;t)=\{x\in S:\mu(x)\geq t\}$ is called a level subset of $S$ determined by $\mu$ and $t.$
\end{definition}

\begin{definition}
\cite{Mu} A fuzzy subset $\mu$ of a set $X$ of the form
\begin{equation*}
\mu (y)=\left\{
\begin{array}{c}
t(\neq 0)\text{ \ if }y=x \\
0\text{ \ \ \ \ \ \ \ if }y\neq x%
\end{array}%
\right.
\end{equation*}

is said to be a fuzzy point with support $x$ and value $t$ and is denoted by $x_{t}.$
\end{definition}

\begin{definition}
\cite{S5} $(S1)$ A non-empty fuzzy subset $\mu$ of a $\Gamma$-semigroup $S$ is called a fuzzy subsemigroup of $S$ if $\mu(x\gamma y)\geq\min\{\mu(x),\mu(y)\} \forall x,y\in S,\forall\gamma\in\Gamma.$
\end{definition}

\begin{definition}
\cite{S5} $(B1)$ A fuzzy subsemigroup $\mu$ of a $\Gamma$-semigroup $S$ is called a fuzzy bi-ideal of $S$ if $\mu(x\alpha y\beta z)\geq\min\{\mu(x),\mu(z)\} $ $\forall x,y,z\in S,\forall\alpha,\beta\in\Gamma.$
\end{definition}

\begin{definition}
\cite{Yu1} A fuzzy point $x_{t}$ is said to belong to $($be quasi coincident with$)$ a fuzzy subset $\mu,$ written as $x_{t}\in\mu($resp. $x_{t}q\mu)$ if $\mu(x)\geq t($resp. $\mu(x)+t>1).$

$x_{t}\in\mu$ or $x_{t}q\mu$ will be denoted by $x_{t}\in\vee q\mu,$ $x_{t}\in\mu$ and $x_{t}q\mu$ will be denoted by
$x_{t}\in\wedge q\mu.$ $x_{t}\overline{\in}\mu,$ $x_{t}\overline{\in\vee q}\mu$ and $x_{t}\overline{\in\wedge q}\mu$ will respectively mean $x_{t}\notin \mu,x_{t}\notin \vee q\mu$ and $x_{t}\notin\wedge q\mu$.
\end{definition}

\begin{definition}
\cite{Yu1} Let $X$ be a non-empty set and $\mu$ be a fuzzy subset of $X.$ Then for any $t\in (0,1],$ the sets $\mu _{t}=\{x\in X:\mu (x)\geq t\}$ and $Supp(\mu)=\{x\in X:\mu(x)>0\}$ are called $t$-level subset and supporting set of $\mu$ respectively.
\end{definition}

\begin{definition}
\cite{S1}Let $S$ be a $\Gamma$-semigroup, $\lambda$ and $\mu$ are fuzzy subsets of $S.$ Then the product of $\lambda$ and $\mu,$ denoted by, $\lambda\circ\mu $ is defined by
\begin{equation*}
(\lambda \circ \mu )(x)=\left\{
\begin{array}{c}
\underset{x=y\gamma z}{\sup }\min \{\lambda (y),\mu (z)\}\text{ }\forall
y,z\in S,\forall \gamma \in \Gamma \\
0\text{ \ \ \ \ \ \ \ \ \ \ \ otherwise}%
\end{array}%
\right. .
\end{equation*}

Also, for any fuzzy subsets $\lambda,\mu$ and $\nu$ of $S,(\lambda\circ\mu)\circ\nu=\lambda\circ(\mu\circ\nu).$
\end{definition}


\section{Main Results}

\begin{definition}
$(1)$ A non-empty fuzzy subset $\mu$ of a $\Gamma$-semigroup $S$ is said to be an $(\in,\in\vee q)$-fuzzy subsemigroup of $S$ if $\forall x,y\in S,\forall\gamma\in\Gamma$ and $t,r\in (0,1],x_{t},y_{r}\in\mu\Rightarrow(x\gamma y)_{\min(t,r)}\in\vee q\mu.$

$(2)$ An $(\in,\in\vee q)$-fuzzy subsemigroup $\mu$ of a $\Gamma$-semigroup $S$ is said to be an $(\in,\in\vee q)$-fuzzy bi-ideal of $S$ if $\forall x,y,z\in S,\forall\alpha,\beta\in\Gamma$ and $t,r\in (0,1], x_{t},z_{r}\in\mu\Rightarrow(x\alpha y\beta z)_{\min(t,r)}\in\vee q\mu.$
\end{definition}

\begin{theorem}
Let $\mu$ be any non-empty fuzzy subset of a $\Gamma$-semigroup $S.$ Then the following statements are equivalent: $(1)$ $\mu$ is an $(\in,\in \vee q)$-fuzzy subsemigroup of $S,$ $(2)$ for any $x,y\in Supp(\mu),\gamma\in\Gamma,\mu (x\gamma y)\geq\min \{\mu (x),\mu (y),0.5\},$ $(3)$ $\mu \circ \mu \subseteq \vee q\mu ,$ $(4)$ $\mu \circ \mu \cap $ $0.5_{Supp(\mu)}\subseteq \mu ,$ $(5)$ for any $r\in (0,0.5],$ if $\mu _{r}$ is non-empty, then
$\mu _{r}$ is a subsemigroup of $S.$
\end{theorem}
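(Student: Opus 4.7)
My approach is to establish the cycle of implications $(1) \Rightarrow (2) \Rightarrow (5) \Rightarrow (1)$ together with $(2) \Leftrightarrow (3) \Leftrightarrow (4)$, giving the full five-way equivalence. The central theme throughout is the translation between the quasi-coincidence formulation (appearing in (1), (3), (4)) and a pointwise inequality capped by $0.5$ (appearing in (2), (5)).

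For $(1) \Rightarrow (2)$, given $x,y \in \mathrm{Supp}(\mu)$ and $\gamma \in \Gamma$, I plug $t=\mu(x)$ and $r=\mu(y)$ into (1) to obtain either $\mu(x\gamma y) \geq \min(\mu(x),\mu(y))$ or $\mu(x\gamma y) > 1 - \min(\mu(x),\mu(y))$. A case analysis on whether $\min(\mu(x),\mu(y)) \leq 0.5$ yields the target bound; in the ``$>0.5$'' case, a second application of (1) with $t=r=0.5$ forces $\mu(x\gamma y) \geq 0.5$. The converse $(2) \Rightarrow (1)$ runs the same translation in reverse: for $x_t,y_r \in \mu$ we have $x,y \in \mathrm{Supp}(\mu)$, so (2) gives $\mu(x\gamma y) \geq \min(t,r,0.5)$, and splitting on $\min(t,r) \lessgtr 0.5$ converts this into $(x\gamma y)_{\min(t,r)} \in\vee q\, \mu$.

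For $(2) \Leftrightarrow (5)$, the forward direction is immediate: if $x,y \in \mu_r$ with $r \in (0,0.5]$, then (2) gives $\mu(x\gamma y) \geq \min(\mu(x),\mu(y),0.5) \geq r$. For the reverse, given $x,y \in \mathrm{Supp}(\mu)$, take $r = \min(\mu(x),\mu(y),0.5) \in (0,0.5]$; then $x,y \in \mu_r$, and (5) yields $x\gamma y \in \mu_r$, i.e.\ $\mu(x\gamma y) \geq r$. For $(2) \Leftrightarrow (3)$ I use the standard reformulation that $\mu \circ \mu \subseteq \vee q\, \mu$ is equivalent to $\mu(x) \geq \min((\mu\circ\mu)(x), 0.5)$ for every $x$. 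Then (2), applied at each decomposition $x = y\gamma z$ with $y,z \in \mathrm{Supp}(\mu)$, yields $\mu(x) \geq \min(\mu(y),\mu(z),0.5)$; taking the supremum over decompositions and commuting $\min(\cdot,0.5)$ with $\sup$ (valid because that truncation is non-decreasing) gives (3). The reverse direction is an immediate substitution of a single decomposition. Finally, $(3) \Leftrightarrow (4)$ compares the same inequality against the smaller truncation $0.5_{\mathrm{Supp}(\mu)}$: the forward direction is trivial since $0.5_{\mathrm{Supp}(\mu)} \leq 0.5$, while the reverse uses that outside $\mathrm{Supp}(\mu)$ both $\mu$ and the truncated product vanish, so the restricted inequality propagates to the unrestricted one.

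The main bookkeeping obstacle is the persistent need to split on whether a value is above or below the threshold $0.5$, especially when converting the ``quasi-coincidence'' alternative into the uniform cap $\min(\cdot,0.5)$ and when verifying that the supremum in the definition of $\mu \circ \mu$ interacts correctly with the truncation at $0.5$. A related subtlety at $(3) \Leftrightarrow (4)$ is to confirm that restricting the truncation to $\mathrm{Supp}(\mu)$ does not weaken the inequality in any essential way, which comes down to observing that both sides of the claimed containment are zero off the support of $\mu$.
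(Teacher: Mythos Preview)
Your overall strategy is sound and differs from the paper mainly in organization: the paper runs a single cycle $(1)\Rightarrow(2)\Rightarrow(3)\Rightarrow(4)\Rightarrow(5)\Rightarrow(1)$, whereas you establish the core equivalence $(1)\Leftrightarrow(2)\Leftrightarrow(5)$ first and then attach $(3)$ and $(4)$ to $(2)$ by direct two-way arguments. Your $(1)\Rightarrow(2)$ is a direct case split (with a second application of the hypothesis at $t=r=0.5$ when $\min(\mu(x),\mu(y))>0.5$), while the paper argues by contradiction; your reformulation of $(3)$ as the pointwise inequality $\mu(x)\geq\min\{(\mu\circ\mu)(x),0.5\}$ is a tidy repackaging of the paper's fuzzy-point chase. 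The steps $(2)\Leftrightarrow(5)$ and $(2)\Leftrightarrow(3)$ are correct as you outline them.

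There is, however, a genuine gap in your $(4)\Rightarrow(3)$ step. You assert that ``outside $\mathrm{Supp}(\mu)$ both $\mu$ and the truncated product vanish,'' but what you actually need off the support is $\min\{(\mu\circ\mu)(x),0.5\}\leq\mu(x)=0$, i.e.\ $(\mu\circ\mu)(x)=0$; nothing rules out a decomposition $x=y\gamma z$ with $y,z\in\mathrm{Supp}(\mu)$ while $x\notin\mathrm{Supp}(\mu)$. The paper's own proof of $(4)\Rightarrow(5)$ contains exactly the same unjustified move: it writes $(\mu\circ\mu\cap 0.5_{\mathrm{Supp}(\mu)})(x\gamma y)=\min\{(\mu\circ\mu)(x\gamma y),0.5\}$, silently assuming $x\gamma y\in\mathrm{Supp}(\mu)$. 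Indeed one can build a two-element $\Gamma$-semigroup with constant product in which $(4)$ holds but $(1)$--$(3)$ and $(5)$ all fail, so the defect lies in the formulation of $(4)$ rather than in your argument specifically; replacing $0.5_{\mathrm{Supp}(\mu)}$ by the constant function $0.5$ on $S$ repairs both your proof and the paper's.
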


\begin{proof}
$(1)\Rightarrow(2):$ Let $\mu$ be an $(\in, \in \vee q)$-fuzzy subsemigroup of $S$. Let $x,y\in Supp(\mu), \gamma\in \Gamma$. If possible, let $\mu(x\gamma y)<\min \{\mu(x),\mu(y),0.5\}$. Let us choose $r,t\in (0,1]$ so that $\mu(x\gamma y)<\min(r,t)<\min\{\mu(x),\mu(y),0.5\}.$ Then $x_{\min(r,t)},y_{\min(r,t)}\in\mu$ but $(x\gamma y)_{\min(r,t)}\overline{\in \vee q}\mu,$ a contradiction. Therefore, $(1)\Rightarrow(2).$\\

$(2)\Rightarrow(3):$ Let us suppose that $(2)$ holds. Let $x_{r}\in \mu\circ\mu$. Then$(\mu \circ \mu)(x)\geq r.$ If possible, let $x_{r}\overline{\in \vee q}\mu.$ Then $\mu(x)<r $ and $\mu(x)+r\leq 1.$ Hence $\mu(x)<0.5.$ Let $x=y\gamma z$ for $y,z\in S, \gamma\in\Gamma$. Then $\mu(x)=\mu(y\gamma z)\geq\min\{\mu(y),\mu(z),0.5\}=\min\{\mu(y),\mu(z)\}.$ Now $r\leq(\mu\circ\mu)(x)=\underset{x=y\gamma z}{\sup }\min \{\mu (y),\mu(z)\}\leq\underset{x=y\gamma z}{\sup }\mu(y\gamma z)=\mu(x),$ a contradiction. Hence $x_{r}\in \vee q\mu.$ Therefore $(2)\Rightarrow(3).$\\

$(3)\Rightarrow(4):$ Let $(3)$ holds. Let $x_{r}\in\mu \circ \mu \cap 0.5_{Supp(\mu)}.$ Then $x_{r}\in(\mu\circ\mu)$ and $r\leq 0.5$. If possible let $x_{r}\notin\mu,$ then $\mu(x)<r\leq0.5$. Hence $\mu(x)<r$ and $\mu(x)+r<1$ $\Rightarrow x_{r}\overline{\in \vee q}\mu,$ which contradicts $x_{r}\in(\mu\circ\mu)\subseteq \vee q\mu.$ Hence $x_{r}\in\mu.$ Therefore, $(3)\Rightarrow(4).$\\

$(4)\Rightarrow(5):$ Let $(4)$ holds. Let $r\in(0,0.5]$ such that $\mu_{r}$ is nonempty. Let $x,y\in\mu_{r}$ and $\gamma\in\Gamma.$ Then $\mu(x)\geq r,\mu(y)\geq r.$ Then $\mu(x\gamma y)\geq(\mu \circ \mu \cap0.5_{Supp(\mu)})(x\gamma y)=\min\{(\mu\circ\mu)(x\gamma y),0.5\}\geq\min \{\min\{\mu (x),\mu(y)\},0.5\}
\geq\min\{r,0.5\}=r.$ This implies that $x\gamma y\in\mu_{r}.$ Hence $\mu_{r}$ is a subsemigroup of S. Therefore $(4)\Rightarrow(5).$\\

$(5)\Rightarrow(1):$ Let $(5)$ holds. Let $x,y\in S$, $\gamma\in\Gamma$ and $t,r\in (0,1]$ such that $x_{t},y_{r}\in\mu$. Then $\mu(x)\geq t,\mu(y)\geq r$. If possible let $(x\gamma y)_{\min(t,r)}\overline{\in \vee
q}\mu$. Then $\mu(x\gamma y)<\min(t,r)$ and $\mu(x\gamma y)+\min(t,r)\leq 1.$ Hence $\mu(x\gamma y)<0.5.$ Let us choose $s$ such that $\mu(x\gamma y)<s \leq\min(r,t,0.5)\leq\min\{\mu(x),\mu(y),0.5\}.$ Then $x,y\in \mu_{s}$ but $x\gamma
y\notin\mu_{s},$ a contradiction. Therefore, $(5)\Rightarrow(1).$
\end{proof}

\begin{remark}
$(1)$ It is clear that if $\mu $ is an $(\in ,\in \vee q)$-fuzzy subsemigroup of $S,$ then $Supp(\mu )$ is a subsemigroup of $S.$ $(2)$ If $\mu $ is a fuzzy subsemigroup of a $\Gamma $-semigroup $S,$ then $\mu $ is an $(\in, \in \vee q)$-fuzzy subsemigroup of $S.$ However, the converse is not necessarily true which is clear from the following example.
\end{remark}

\begin{example}
Let $S=\{e,a,b\}$ and $\Gamma =\{\gamma \}$, where $\gamma $ is defined on $%
S $ with the following cayley table:
\begin{equation*}
\begin{array}{l|lll}
\gamma & e & a & b \\ \hline
e & e & e & e \\
a & e & a & e \\
b & e & e & b%
\end{array}%
\end{equation*}

Then $S$ is a $\Gamma $-semigroup. We define a fuzzy subset $\mu:S\rightarrow \lbrack 0,1]$ as
\begin{equation*}
\mu (x)=\left\{%
\begin{array}{l}
0.5\text{ \ if }x=e \\
0.6\text{ if }x=a,b.%
\end{array}%
\right. .
\end{equation*}

Then it is easy to verify that $\mu $ is an $(\in, \in \vee q)$-fuzzy subsemigroup of $S,$ but it is not a fuzzy subsemigroup of $S.$
\end{example}

\begin{theorem}
Let $\mu $ be any $(\in, \in \vee q)$-fuzzy subsemigroup of a $\Gamma$-semigroup $S.$ Then the following statements are equivalent: $(1)$ $\mu $ is an $(\in, \in \vee q)$-fuzzy bi-ideal of $S,$ $(2)$ for any $x,z\in Supp(\mu),y\in S$ and $\alpha,\beta \in \Gamma ,\mu(x\alpha y\beta z)\geq \min \{\mu (x),\mu (z),0.5\},$ $(3)$ $\mu \circ\chi_{S}\circ \mu \subseteq \vee q\mu($where $\chi_{S}$ is the characteristic function of $S),$ $(4)$ $\mu \circ\chi_{S}\circ \mu \cap $ $0.5_{Supp(\mu)}\subseteq\mu($where $\chi_{S}$ is the characteristic function of $S),$ $(5)$ for any $r\in (0,0.5],$ if $\mu _{r}$ is non-empty, then $\mu _{r}$ is a bi-ideal of $S.$
\end{theorem}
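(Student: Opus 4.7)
The strategy is a cyclic chain $(1) \Rightarrow (2) \Rightarrow (3) \Rightarrow (4) \Rightarrow (5) \Rightarrow (1)$ that runs in exact parallel with the proof of Theorem 3.2, substituting the sandwich product $x\alpha y\beta z$ for $x\gamma y$ and the pair $(\mu(x),\mu(z))$ for $(\mu(x),\mu(y))$ in every bound. Three of the implications are essentially copies of their counterparts in Theorem 3.2. For $(1)\Rightarrow(2)$, if $\mu(x\alpha y\beta z) < \min\{\mu(x),\mu(z),0.5\}$, I would pick $t,r\in(0,1]$ with $\mu(x\alpha y\beta z)<\min(t,r)<\min\{\mu(x),\mu(z),0.5\}$; then $x_{\min(t,r)},z_{\min(t,r)}\in\mu$ but $(x\alpha y\beta z)_{\min(t,r)}\overline{\in\vee q}\mu$, contradicting (1). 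For $(3)\Rightarrow(4)$, if $x_r\in(\mu\circ\chi_S\circ\mu)\cap 0.5_{Supp(\mu)}$ yet $x_r\notin\mu$, then $\mu(x)<r\le 0.5$ forces $x_r\overline{\in\vee q}\mu$, contradicting (3). For $(5)\Rightarrow(1)$, assuming $(x\alpha y\beta z)_{\min(t,r)}\overline{\in\vee q}\mu$, pick $s$ with $\mu(x\alpha y\beta z)<s\le\min(t,r,0.5)$ so that $x,z\in\mu_s$ while $x\alpha y\beta z\notin\mu_s$, violating the bi-ideal condition on $\mu_s$ guaranteed by (5).

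The implication requiring a genuinely new computation is $(2)\Rightarrow(3)$, and here I would exploit the associativity of $\circ$ recalled in Definition 2.8 to unpack
\begin{equation*}
(\mu\circ\chi_S\circ\mu)(x) \;=\; \underset{x=u\alpha p\beta w}{\sup}\min\{\mu(u),\chi_S(p),\mu(w)\} \;=\; \underset{x=u\alpha p\beta w}{\sup}\min\{\mu(u),\mu(w)\}.
\end{equation*}
Suppose $x_r\in\mu\circ\chi_S\circ\mu$ but $x_r\overline{\in\vee q}\mu$; the standard deduction gives $\mu(x)<r$ and $\mu(x)<0.5$. For any decomposition $x=u\alpha p\beta w$ contributing positively to the supremum, both $u$ and $w$ lie in $Supp(\mu)$, so hypothesis (2) gives $\mu(x)=\mu(u\alpha p\beta w)\ge\min\{\mu(u),\mu(w),0.5\}=\min\{\mu(u),\mu(w)\}$, using $\mu(x)<0.5$. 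Taking the supremum over all such decompositions yields $\mu(x)\ge(\mu\circ\chi_S\circ\mu)(x)\ge r$, the desired contradiction.

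For $(4)\Rightarrow(5)$, fix $r\in(0,0.5]$ with $\mu_r\ne\emptyset$, take $x,z\in\mu_r$, $y\in S$, $\alpha,\beta\in\Gamma$, and observe $(\mu\circ\chi_S\circ\mu)(x\alpha y\beta z)\ge\min\{\mu(x),\chi_S(y),\mu(z)\}\ge r>0$. Applying (4) then forces $\mu(x\alpha y\beta z)\ge\min\{(\mu\circ\chi_S\circ\mu)(x\alpha y\beta z),0.5\}\ge\min\{r,0.5\}=r$, so $x\alpha y\beta z\in\mu_r$. Combined with the fact that $\mu_r$ is already a subsemigroup (Theorem 3.2 applied to the standing hypothesis that $\mu$ is an $(\in,\in\vee q)$-fuzzy subsemigroup), this makes $\mu_r$ a bi-ideal of $S$.

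The one delicate point I expect is in $(2)\Rightarrow(3)$: the three-factor product must be expanded correctly via associativity, and I need to confirm that decompositions with $u$ or $w$ outside $Supp(\mu)$ contribute zero and can therefore be discarded when invoking (2). Once that bookkeeping is in place, the remaining steps are mechanical transcriptions of the arguments already laid down for Theorem 3.2.
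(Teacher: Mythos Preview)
Your proposal is correct and follows essentially the same cyclic route $(1)\Rightarrow(2)\Rightarrow(3)\Rightarrow(4)\Rightarrow(5)\Rightarrow(1)$ as the paper, with the same contradiction arguments at each step; the only cosmetic difference is that in $(2)\Rightarrow(3)$ you unpack the triple product directly via a single decomposition $x=u\alpha p\beta w$, whereas the paper iterates two binary decompositions $x=p\alpha z$, $z=q\beta r$, which is equivalent by the associativity of $\circ$. Your explicit remark that decompositions with $u$ or $w$ outside $Supp(\mu)$ contribute zero (and hence can be ignored when invoking (2)) is a point the paper leaves implicit.
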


\begin{proof}
$(1)\Rightarrow(2):$ Let $\mu$ be an $(\in,\in \vee q)$-fuzzy bi-ideal of $S.$ Let $x,z\in Supp(\mu),\alpha,\beta\in \Gamma$ and y$\in $S be such that $\mu(x\alpha y\beta z)<\min\{\mu(x),\mu(z),0.5\}$. Let us consider $r,t$ so that $\mu(x\alpha y\beta z)<\min(r,t)<\min\{\mu(x),\mu(z),0.5\}.$ Then $(x\alpha y\beta z)_{\min(r,t)}\overline{\in \vee q}\mu,$ though $x_{\min(r,t)},z_{\min(r,t)}\in\mu,$ a contradiction. Therefore, $(1)\Rightarrow(2).$\\

$(2)\Rightarrow(3):$ Let $(2)$ holds. Let $x_{r}\in \mu\circ \chi_{S}\circ \mu$, then $(\mu\circ\chi_{S}\circ\mu)(x)\geq r.$ If possible let $x_{r}\overline{\in \vee q}\mu.$ Then $\mu(x)<r $ and $\mu(x)+r\leq 1.$ Hence $\mu(x)<0.5.$ Let $x=p\alpha z$ where $z=q\beta r$ for $p,z,q,r\in S,\alpha,\beta\in\Gamma$. Then $\mu(x)=\mu( p\alpha z)=\mu(p\alpha q\beta r)\geq\in\{\mu(p),\mu(r),0.5\}=\min\{\mu(p),\mu(r)\}.$ Now $r\leq
(\mu\circ\chi_{S}\circ\mu)(x)=\underset{x=p\alpha z}{\sup}\min\{\mu(p),(\chi_{S}\circ\mu)(z)\}=\underset{x=p\alpha
z}{\sup}\min\{\mu(p),\underset{z=q\beta r}{\sup}\min\{\chi_{S}(q),\mu(r)\}\}\leq\underset{x=p\alpha q\beta r}{\sup}\min\{\mu(p),\mu(r)\}\}=\mu(x),$ a contradiction. Hence $x_{r}\in \vee q\mu.$ Therefore, $(2)\Rightarrow(3).$\\

$(3)\Rightarrow(4):$ Let $(3)$ holds. Let $x_{r}\in\mu \circ \chi_{S}\circ\mu \cap 0.5_{Supp(\mu)}.$ Then $x_{r}\in(\mu\circ\chi_{S}\circ\mu)$ and $r\leq 0.5$. If $x_{r}\notin\mu$, then $\mu(x)<r\leq0.5$. Hence $\mu(x)<r$ and $\mu(x)+r\leq 1$. Then $x_{r}\overline{\in \vee q}\mu,$ which contradicts $x_{r}\in(\mu\circ\chi_{S}\circ\mu)\subseteq\vee q\mu.$ Hence $x_{r}\in\mu.$ Therefore, $(3)\Rightarrow(4).$\\

$(4)\Rightarrow(5):$ Let $(4)$ holds. Let $r\in(0,0.5]$ and $\mu_{r}$ is nonempty. Since $\mu$ is an $(\in, \in \vee q)$-fuzzy subsemigroup of $S,$ then by Theorem $3.2,$ $\mu_{r}$ is a subsemigroup of $S.$ Let $x,z\in\mu_{r}$, $y\in S,\alpha,\beta\in\Gamma$. Then $\mu(x),\mu(z)\geq r$. Now $\mu(x\alpha y\beta z)\geq((\mu\circ\chi_{S}\circ\mu)\cap 0.5_{Supp(\mu)})(x\alpha y\beta z)=\min\{(\mu\circ\chi_{S}\circ\mu)(x\alpha y\beta z),0.5\}\geq\min\{\min\{\mu(x),\chi_{S}(y),\mu(z)\},0.5\}\\=\min\{\mu(x),\mu(z),0.5\}\geq\min\{r,r,0.5\}=r$. Therefore $x\alpha y\beta z\in\mu_{r}$. Hence $\mu_{r}$ is a bi-ideal of $S.$ Hence $(4)\Rightarrow(5).$\\

$(5)\Rightarrow(1):$ Let $(5)$ holds. Let $x,y,z\in S,\alpha,\beta\in\Gamma$ and $t,r\in(o,1]$ such that $x_{t},z_{r}\in\mu$. If possible let $(x\alpha y\beta z)_{\min(t,r)}\overline{\in \vee q}\mu$. Then $\mu(x\alpha y\beta z)<\min(t,r)$ and $\mu(x\alpha y\beta z)+\min(t,r)\leq 1.$ Hence $\mu(x\alpha y\beta z)<0.5.$ Let us choose $p$ such that $\mu(x\alpha y\beta z)<p \leq\min(t,r,0.5)\leq\min\{\mu(x),\mu(z),0.5\}.$ Then $x,z\in \mu_{p}$ but $x\alpha y\beta z\notin\mu_{p},$ a contradiction. Therefore, $(5)\Rightarrow(1).$
\end{proof}

\begin{corollary}
Let $\mu $ is an $(\in, \in \vee q)$-fuzzy subsemigroup$((\in, \in  \vee q)$-fuzzy bi-ideal$)$ of $S.$ Then $Supp(\mu)$ is a subsemigroup$($resp. bi-ideal$)$ of $S.$
\end{corollary}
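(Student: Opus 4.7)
The plan is to derive the corollary directly from the equivalences $(1)\Leftrightarrow(2)$ already established in Theorems $3.2$ and $3.4$. In each case condition $(2)$ gives a pointwise lower bound of the form $\min\{\mu(\cdot),\mu(\cdot),0.5\}$ on the relevant product, and this bound is strictly positive as soon as the ``input'' elements lie in $Supp(\mu)$, which is exactly what we need.

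For the subsemigroup part, suppose $\mu$ is an $(\in,\in\vee q)$-fuzzy subsemigroup of $S$, and fix arbitrary $x,y\in Supp(\mu)$ and $\gamma\in\Gamma$. Applying $(1)\Rightarrow(2)$ of Theorem $3.2$ yields
\[
\mu(x\gamma y)\geq\min\{\mu(x),\mu(y),0.5\}.
\]
Since $\mu(x)>0$ and $\mu(y)>0$, the right-hand side is strictly positive, so $\mu(x\gamma y)>0$, i.e.\ $x\gamma y\in Supp(\mu)$. This shows $Supp(\mu)\Gamma Supp(\mu)\subseteq Supp(\mu)$, so $Supp(\mu)$ is a subsemigroup of $S$.

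For the bi-ideal part, assume $\mu$ is an $(\in,\in\vee q)$-fuzzy bi-ideal of $S$. By Definition $3.1(2)$ $\mu$ is in particular an $(\in,\in\vee q)$-fuzzy subsemigroup, so the preceding paragraph already gives that $Supp(\mu)$ is a subsemigroup. It remains to verify $Supp(\mu)\Gamma S\Gamma Supp(\mu)\subseteq Supp(\mu)$. For any $x,z\in Supp(\mu)$, $y\in S$, $\alpha,\beta\in\Gamma$, the implication $(1)\Rightarrow(2)$ of Theorem $3.4$ gives
\[
\mu(x\alpha y\beta z)\geq\min\{\mu(x),\mu(z),0.5\}>0,
\]
so $x\alpha y\beta z\in Supp(\mu)$, and $Supp(\mu)$ is a bi-ideal of $S$.

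There is essentially no obstacle to overcome: the entire content has been absorbed into Theorems $3.2$ and $3.4$, and the corollary is a one-line consequence of their condition $(2)$ together with the definition of $Supp(\mu)$. The only point requiring mild care is the bi-ideal case, where one must first invoke the subsemigroup closure of $Supp(\mu)$ before adding the $\Gamma S\Gamma$-absorption, since ``bi-ideal'' is defined in the paper as a subsemigroup satisfying the additional containment.
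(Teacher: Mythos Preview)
Your argument is correct and is exactly the route the paper intends: the corollary is stated without proof immediately after the two characterization theorems, and the preceding Remark already records the subsemigroup case as ``clear'' from Theorem~3.2; you have simply spelled out the obvious deduction from condition~$(2)$ in each theorem. One small correction: in the paper's numbering the bi-ideal characterization is Theorem~3.3, not Theorem~3.4 (the corollary itself is~3.4), so your two references to ``Theorem~3.4'' should read ``Theorem~3.3''.
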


\begin{theorem}
Let \{$\mu_{i}:i\in I$\} be any family of $(\in,\in \vee q)$-fuzzy subsemigroups of $S.$ Then $\bigcap_{i\in I}\mu_{i}$ and $\bigcup_{i\in I}\mu_{i}$ is a $(\in,\in \vee q)$-fuzzy subsemigroup of $S.$ If \{$\mu_{i}:i\in I$\} be any family of $(\in,\in \vee q)$-fuzzy bi-ideals of $S,$ then both $\bigcap_{i\in I}\mu_{i}$ and $\bigcup_{i\in I}\mu_{i}$ are $(\in, \in \vee q)$-fuzzy bi-ideals of $S.$
\end{theorem}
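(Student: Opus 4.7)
My plan is to avoid working with the $(\in,\in\vee q)$ definition directly and instead reduce everything to the equivalent inequality characterizations already proved: clause $(2)$ of Theorem $3.2$ for the subsemigroup case, which says $\mu(x\gamma y)\geq\min\{\mu(x),\mu(y),0.5\}$ for all $x,y\in Supp(\mu)$, $\gamma\in\Gamma$, and clause $(2)$ of Theorem $3.4$ for the bi-ideal case, which says $\mu(x\alpha y\beta z)\geq\min\{\mu(x),\mu(z),0.5\}$ for $x,z\in Supp(\mu)$, $y\in S$, $\alpha,\beta\in\Gamma$. Once the problem is translated into these numerical inequalities on $[0,1]$, the proof becomes a short lattice-theoretic calculation.

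For the intersection $\mu=\bigcap_{i\in I}\mu_i$, defined pointwise by $\mu(x)=\inf_i\mu_i(x)$, I would first observe that $Supp(\mu)\subseteq Supp(\mu_i)$ for every $i$, since $\mu(x)>0$ forces $\mu_i(x)\geq\mu(x)>0$. Fixing $x,y\in Supp(\mu)$ and $\gamma\in\Gamma$, each $\mu_i$ satisfies its inequality on this pair, and taking the infimum over $i$ of $\mu_i(x\gamma y)\geq\min\{\mu_i(x),\mu_i(y),0.5\}$, while using that $\min$ commutes with $\inf$ in this form, produces $\mu(x\gamma y)\geq\min\{\mu(x),\mu(y),0.5\}$. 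An appeal to Theorem $3.2$ then finishes the subsemigroup statement. The bi-ideal version is the same computation with $x\alpha y\beta z$ in place of $x\gamma y$, now using Theorem $3.4$.

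For the union $\nu=\bigcup_{i\in I}\mu_i$, defined pointwise by $\nu(x)=\sup_i\mu_i(x)$, I would fix $x,y\in Supp(\nu)$ and argue by thresholds: for an arbitrary $r<\min\{\nu(x),\nu(y),0.5\}$, select an index $i_0$ with $\mu_{i_0}(x)>r$ and $\mu_{i_0}(y)>r$, so in particular $x,y\in Supp(\mu_{i_0})$, then apply Theorem $3.2(2)$ to $\mu_{i_0}$ to obtain $\nu(x\gamma y)\geq\mu_{i_0}(x\gamma y)\geq\min\{r,0.5\}=r$, and finally let $r$ increase to $\min\{\nu(x),\nu(y),0.5\}$. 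The bi-ideal analog is identical via Theorem $3.4(2)$. The main obstacle sits precisely here: the simultaneous selection of a single index $i_0$ witnessing near-suprema of $\nu$ at both $x$ and $y$ is not guaranteed from the pointwise definition of the supremum alone, and the inequality $\sup_i\min\{a_i,b_i,c\}\geq\min\{\sup_i a_i,\sup_i b_i,c\}$ generally fails. I expect the argument to either implicitly assume some directedness/chain structure on $\{\mu_i\}$, or to exploit the slack built into the $0.5$ threshold of the $\in\vee q$ relation to force the inequality through, so this is the step I would scrutinize most carefully in the authors' write-up.
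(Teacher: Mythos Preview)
Your intersection argument is exactly the paper's: both reduce to the pointwise inequality of Theorem~3.2(2) (respectively Theorem~3.4(2)) and push the infimum through the $\min$, using $Supp(\theta)\subseteq Supp(\mu_i)$ implicitly.

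For the union, the paper offers only the sentence ``Similarly $\bigcup_{i\in I}\mu_{i}$ is an $(\in,\in\vee q)$-fuzzy subsemigroup of $S$'' with no further argument, so your instinct to scrutinize precisely this step is correct---and in fact the claim is false as stated, not merely unproved. Using the paper's own Example~2 (the $\Gamma$-semigroup $S=\{e,a,b\}$ with $a\gamma a=a$, $b\gamma b=b$, $a\gamma b=e$), the characteristic functions $\mu_{\{a\}}$ and $\mu_{\{b\}}$ are ordinary fuzzy subsemigroups, hence $(\in,\in\vee q)$-fuzzy subsemigroups; their union is $\mu_{\{a,b\}}$, and
\[
\mu_{\{a,b\}}(a\gamma b)=\mu_{\{a,b\}}(e)=0<0.5=\min\{\mu_{\{a,b\}}(a),\mu_{\{a,b\}}(b),0.5\},
\]
violating Theorem~3.2(2). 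Thus the failure you anticipated in the inequality $\sup_i\min\{a_i,b_i,0.5\}\geq\min\{\sup_i a_i,\sup_i b_i,0.5\}$ is real, the $0.5$ threshold provides no rescue, and your suggestion that a chain or directedness hypothesis on $\{\mu_i\}$ is the missing ingredient is exactly what would be needed to salvage the union statement.
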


\begin{proof}
Let $\{\mu_{i}:i\in I\}$ be any family of $(\in, \in \vee q)$-fuzzy subsemigroup of $S.$ Let $\theta=\bigcap_{i\in I}\mu_{i}$. Let $x,y\in Supp(\theta),\gamma\in\Gamma$. Then
\begin{align*}
\theta(x\gamma y)&=\underset{i\in I}{\min}\mu_{i}(x\gamma y)\\
&\geq\underset{i\in I}{\min}\min\{\mu_{i}(x),\mu_{i}(y),0.5\}\\
&=\min\{\underset{i\in I}{\min}\mu_{i}(x),\underset{i\in I}{\min}\mu_{i}(y),0.5\}\\
&=\min\{\theta(x),\theta(y),0.5\}.
\end{align*}
Therefore $\theta=\bigcap_{i\in I}\mu_{i}$ is an $(\in, \in \vee q)$-fuzzy subsemigroup of $S.$ Similarly $\bigcup_{i\in I}\mu_{i}$ is an $(\in, \in \vee q)$-fuzzy subsemigroup of $S.$\\

Let $\{\mu_{i}:i\in I\}$ be any family of $(\in, \in \vee q)$-fuzzy bi-ideals of $S.$ Let $\theta=\bigcap_{i\in I}\mu_{i}$. Then $\theta$ is a $(\in,\in \vee q)$-fuzzy subsemigroup of $S.$ Let $x,z\in Supp(\theta),y\in S,\alpha,\beta\in\Gamma$. Then
\begin{align*}
\theta(x\alpha y\beta z)&=\underset{i\in I}{\min}\mu_{i}(x\alpha y\beta z)\\
&\geq\underset{i\in I}{\min}\min\{\mu_{i}(x),\mu_{i}(z),0.5\}\\
&\geq\min\{\underset{i\in I}{\min}\mu_{i}(x),\underset{i\in I}{\min}\mu_{i}(z),0.5\}\\
&=\min\{\theta(x),\theta(z),0.5\}.
\end{align*}
Therefore $\theta=\bigcap_{i\in I}\mu_{i}$ is an $(\in, \in \vee q)$-fuzzy bi-ideals of $S.$ Similarly $\bigcup_{i\in I}\mu_{i}$ is an $(\in, \in \vee q)$-fuzzy bi-ideals of $S.$
\end{proof}

In view of Theorem $3.5$ we can have the following theorems.

\begin{theorem}
The family of all the $(\in, \in \vee q)$-fuzzy subsemigroups of $S$ with fuzzy set inclusion relation $\subseteq$ constitutes a complete lattice. For any $(\in, \in \vee q)$-fuzzy subsemigroups $\mu$ and $\nu$ of $S, \mu\cap\nu$ and $\mu\cup\nu$ are the greatest lower bound and least upper bound of $\{\mu,\nu\},$ respectively. Moreover, it is closed under fuzzy set union and intersection.
\end{theorem}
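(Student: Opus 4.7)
The plan is to reduce the assertion to Theorem $3.5$, which already establishes closure of the class of $(\in,\in\vee q)$-fuzzy subsemigroups under arbitrary fuzzy intersections and unions. Once that closure is granted, the lattice structure is forced by the pointwise nature of fuzzy set inclusion: $\mu\subseteq\nu$ means $\mu(x)\le\nu(x)$ for every $x\in S$, while fuzzy intersection and union are precisely the pointwise minimum and maximum. Thus my approach is to verify the universal properties of meets and joins inside the ambient fuzzy powerset $[0,1]^{S}$, then transfer them to the sub-poset of $(\in,\in\vee q)$-fuzzy subsemigroups using Theorem $3.5$.

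First I would handle the binary case. Given $(\in,\in\vee q)$-fuzzy subsemigroups $\mu$ and $\nu$, the pointwise inequalities $\min\{\mu(x),\nu(x)\}\le\mu(x),\nu(x)$ give $\mu\cap\nu\subseteq\mu$ and $\mu\cap\nu\subseteq\nu$. If $\sigma$ is any $(\in,\in\vee q)$-fuzzy subsemigroup with $\sigma\subseteq\mu$ and $\sigma\subseteq\nu$, then $\sigma(x)\le\min\{\mu(x),\nu(x)\}=(\mu\cap\nu)(x)$ for every $x\in S$, so $\sigma\subseteq\mu\cap\nu$. Combined with the fact that $\mu\cap\nu$ itself lies in the class by Theorem $3.5$, this identifies $\mu\cap\nu$ as the greatest lower bound of $\{\mu,\nu\}$. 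The argument for $\mu\cup\nu$ as the least upper bound is dual, with $\max$ replacing $\min$.

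To establish completeness, I would repeat this reasoning for an arbitrary family $\{\mu_i:i\in I\}$ of $(\in,\in\vee q)$-fuzzy subsemigroups. By Theorem $3.5$, both $\bigcap_{i\in I}\mu_i$ and $\bigcup_{i\in I}\mu_i$ remain in the class, and the same pointwise computation shows that they are, respectively, the greatest lower bound and the least upper bound of the family in the sub-poset. Hence every subfamily admits both a meet and a join, which is precisely the definition of a complete lattice. The concluding clause, that the lattice is closed under fuzzy set union and intersection, is then a direct restatement of Theorem $3.5$.

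The main obstacle, or rather the conceptual point worth noting, is the need for closure under arbitrary unions. For most algebraic substructures the set-theoretic union fails to be a substructure and the join must be constructed indirectly as the substructure generated by the union; here, however, Theorem $3.5$ grants this closure automatically, so the proof reduces to routine pointwise verification without any generation procedure. Consequently, no non-trivial machinery beyond Theorem $3.5$ and the definition of $\subseteq$ for fuzzy subsets is required.
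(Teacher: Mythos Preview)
Your proposal is correct and follows exactly the approach the paper intends: the paper gives no separate proof for this theorem, merely stating ``In view of Theorem $3.5$ we can have the following theorems,'' so your reduction to Theorem $3.5$ together with the routine pointwise verification of the meet and join properties is precisely what is called for.
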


The following theorem shows that the image and the inverse image of the $(\in,\in \vee q)$-fuzzy subsemigroup and $(\in,\in \vee q)$-fuzzy bi-ideal of a $\Gamma$-semigroup are also $(\in,\in \vee q)$-fuzzy subsemigroup and $(\in,\in \vee q)$-fuzzy bi-ideal.

\begin{definition}
\cite{R} Let $f$ be any function from a set $X$ to a set $X^{'}.$ A fuzzy subset $\mu$ of $X$ is called $f$-invariant if for any $x,y\in X,f(x)=f(y)\Rightarrow\mu(x)=\mu(y).$
\end{definition}

\begin{theorem}
Let $S$ and $S^{'}$ be two $\Gamma$-semigroups, $\mu$ and $\mu^{'}$ are $(\in ,\in \vee q)$-fuzzy subsemigroups of $S$ and $S^{'}$ respectively, and $f$ be a homomorphism from $S$ onto $S^{'}.$ Then $(1)$ $f(\mu)$ is an $(\in, \in \vee q)$-fuzzy subsemigroup of $S^{'}.$ $(2)$ $f^{-1}(\mu^{'})$ is an $(\in ,\in \vee q)$-fuzzy subsemigroup of $S,$ $(3)$ The mapping $\mu\rightarrow f(\mu)$ defines a one-one correspondence between the set of the $f$-invariant $(\in, \in\vee q)$-fuzzy subsemigroups of $S$ and the set of the $(\in, \in\vee q)$-fuzzy subsemigroups of $S^{'}.$
\end{theorem}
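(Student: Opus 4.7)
My plan is to use the characterization from Theorem 3.2(2) throughout: $\mu$ is an $(\in,\in\vee q)$-fuzzy subsemigroup iff $\mu(x\gamma y)\ge \min\{\mu(x),\mu(y),0.5\}$ for all $x,y\in \text{Supp}(\mu)$ and $\gamma\in\Gamma$. Working with this inequality is far more convenient than the fuzzy-point/quasi-coincidence definition, since the proof reduces to manipulating real numbers and using the homomorphism property $f(x\gamma y)=f(x)\gamma f(y)$.

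For part $(2)$ I would do the inverse image first because it is the easiest and sets the pattern. Set $\nu=f^{-1}(\mu')$, so $\nu(x)=\mu'(f(x))$. If $x,y\in \text{Supp}(\nu)$ then $f(x),f(y)\in \text{Supp}(\mu')$, and
\[
\nu(x\gamma y)=\mu'(f(x)\gamma f(y))\ge \min\{\mu'(f(x)),\mu'(f(y)),0.5\}=\min\{\nu(x),\nu(y),0.5\}.
\]
For part $(1)$, recall $f(\mu)(y')=\sup\{\mu(x):f(x)=y'\}$. Given $x',y'\in \text{Supp}(f(\mu))$ and $\varepsilon>0$, choose $x\in f^{-1}(x')$, $y\in f^{-1}(y')$ with $\mu(x)>f(\mu)(x')-\varepsilon$ and $\mu(y)>f(\mu)(y')-\varepsilon$; in particular $x,y\in \text{Supp}(\mu)$ for $\varepsilon$ small. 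Since $f(x\gamma y)=x'\gamma y'$,
\[
f(\mu)(x'\gamma y')\ge \mu(x\gamma y)\ge \min\{\mu(x),\mu(y),0.5\}\ge \min\{f(\mu)(x')-\varepsilon,f(\mu)(y')-\varepsilon,0.5\},
\]
and letting $\varepsilon\to 0$ gives the required inequality.

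For part $(3)$, I would prove well-definedness, injectivity, and surjectivity of $\mu\mapsto f(\mu)$ separately. Well-definedness is part $(1)$. For injectivity, suppose $\mu_1,\mu_2$ are $f$-invariant with $f(\mu_1)=f(\mu_2)$; by $f$-invariance, $f(\mu_i)(f(x))=\mu_i(x)$ for every $x\in S$, so $\mu_1(x)=f(\mu_1)(f(x))=f(\mu_2)(f(x))=\mu_2(x)$. For surjectivity, given an $(\in,\in\vee q)$-fuzzy subsemigroup $\mu'$ of $S'$, put $\mu=f^{-1}(\mu')$; by part $(2)$ it is an $(\in,\in\vee q)$-fuzzy subsemigroup, it is clearly $f$-invariant because $\mu(x)=\mu'(f(x))$ depends only on $f(x)$, and since $f$ is onto, $f(\mu)(y')=\sup_{f(x)=y'}\mu'(f(x))=\mu'(y')$.

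The only subtle point will be handling the supremum in the image construction cleanly; the rest is bookkeeping. I would be careful to note that the approximation argument requires $f(\mu)(x'),f(\mu)(y')>0$ (so that preimages with positive $\mu$-value exist), which is exactly the hypothesis $x',y'\in \text{Supp}(f(\mu))$ used in invoking Theorem 3.2(2). Once parts $(1)$ and $(2)$ are in hand, part $(3)$ is essentially a formal verification.
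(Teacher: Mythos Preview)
Your proposal is correct and follows essentially the same approach as the paper: both use the characterization of Theorem~3.2(2) to reduce everything to the inequality $\mu(x\gamma y)\ge\min\{\mu(x),\mu(y),0.5\}$, and both handle parts (1) and (2) by direct computation with the homomorphism property. The only cosmetic differences are that in part~(1) you use an $\varepsilon$-approximation where the paper directly interchanges the supremum with $\min$ (writing $\sup_{x,y}\min\{\mu(x),\mu(y),0.5\}=\min\{\sup_x\mu(x),\sup_y\mu(y),0.5\}$), and in part~(3) you spell out the injectivity/surjectivity verification whereas the paper simply says ``using (1) and (2) we have the proof.''
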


\begin{proof}
$(1)$ Let $x^{'},y^{'}\in Supp(f(\mu))$ and $\alpha\in\Gamma$. Then $f(\mu)(x^{'})>0$ and $f(\mu)(y^{'})>0 \Rightarrow \underset{x\in f^{-1}(x^{'})}{\sup}\mu(x)>0$ and $\underset{y\in f^{-1}(y^{'})}{\sup}\mu(y)>0$. So there exists $x,y\in S$ such that $f(x)=x^{'},f(y)=y^{'}$ and $\mu(x)>0,\mu(y)>0$. Then $f(x\alpha y)=x^{'}\alpha y^{'}$. Then

\begin{align*}
f(\mu)(x^{'}\alpha y^{'})&=\underset{z\in f^{-1}(x^{'}\alpha y^{'})}{\sup}\mu(z)\\
&\geq\underset{x\in f^{-1}(x^{'}),y\in f^{-1}(y^{'})}{\sup}\mu(x\alpha y)\\
&\geq\underset{x\in f^{-1}(x^{'}),y\in f^{-1}(y^{'})}{\sup}\min\{\mu(x),\mu(y),0.5\}\\
&=\min\{\underset{x\in f^{-1}(x^{'})}{\sup}\mu(x),\underset{y\in f^{-1}(y^{'})}{\sup}
\mu(y),0.5\}\\
&=\min\{f(\mu)(x^{'}),f(\mu)(y^{'}),0.5\}.
\end{align*}
Hence $f(\mu)$ is an $(\in ,\in \vee q)$-fuzzy subsemigroup of $S^{'}.$\\

$(2)$ Let $x,y\in Supp f^{-1}(\mu^{'})$ and $\alpha\in\Gamma$. Then $f^{-1}(x)>0, f^{-1}(y)>0 \Rightarrow \mu^{'}(f(x))>0,\mu^{'}(f(y))>0$. Then
\begin{align*}
f^{-1}(\mu^{'})(x\alpha y)&=\mu^{'}(f(x\alpha y))=\mu^{'}(f(x)\alpha f(y))\\
&\geq\min\{\mu^{'}(f(x)),\mu^{'}(f(y)),0.5\}\\
&=\min\{f^{-1}(\mu^{'})(x),f^{-1}(\mu^{'})(y),0.5\}.
\end{align*}
Hence $f^{-1}(\mu^{'})$ is an $(\in ,\in \vee q)$-fuzzy subsemigroup of $S.$\\

$(3)$ Using $(1)$ and $(2)$ we have the proof.
\end{proof}

Using similar argument as in Theorem $3.8$ we can have the following Theorem.

\begin{theorem}
Let $S$ and $S^{'}$ be two $\Gamma$-semigroups, $\mu$ and $\mu^{'}$ are $(\in ,\in \vee q)$-fuzzy bi-ideals of $S$ and $S^{'}$ respectively, and $f$ be a homomorphism from $S$ onto $S^{'}.$ Then $(1)$ $f(\mu)$ is an $(\in, \in \vee q)$-fuzzy bi-ideal of $S^{'},$ $(2)$ $f^{-1}(\mu^{'})$ is an $(\in, \in \vee q)$-fuzzy bi-ideal
of $S,$ $(3)$ the mapping $\mu\rightarrow f(\mu)$ defines a one-one correspondence between the set of the $f$-invariant $(\in ,\in\vee q)$-fuzzy bi-ideals of $S$ and the set of the $(\in, \in\vee q)$-fuzzy bi-ideals of $S^{'}.$
\end{theorem}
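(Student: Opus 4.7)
The plan is to mirror the argument of Theorem $3.8$ part by part, using Theorem $3.4$ to translate each $(\in,\in\vee q)$-fuzzy bi-ideal assertion into the pointwise inequality $\mu(x\alpha y\beta z)\geq \min\{\mu(x),\mu(z),0.5\}$ for $x,z\in Supp(\mu)$, $y\in S$, $\alpha,\beta\in\Gamma$. Because every bi-ideal is in particular a subsemigroup, Theorem $3.8$ already supplies the subsemigroup half of each of $(1)$--$(3)$, so the only fresh work is the three-variable estimate.

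For $(1)$, I would fix $x',z'\in Supp(f(\mu))$, $y'\in S'$ and $\alpha,\beta\in\Gamma$ and, using surjectivity of $f$, pick any $y\in f^{-1}(y')$. For every $x\in f^{-1}(x')$ and $z\in f^{-1}(z')$ one has $f(x\alpha y\beta z)=x'\alpha y'\beta z'$, hence
\begin{align*}
f(\mu)(x'\alpha y'\beta z') &= \sup_{w\in f^{-1}(x'\alpha y'\beta z')}\mu(w)\\
&\geq \sup_{x\in f^{-1}(x'),\,z\in f^{-1}(z')}\mu(x\alpha y\beta z)\\
&\geq \sup_{x\in f^{-1}(x'),\,z\in f^{-1}(z')}\min\{\mu(x),\mu(z),0.5\}\\
&= \min\{f(\mu)(x'),f(\mu)(z'),0.5\}.
\end{align*}
The second inequality applies Theorem $3.4$ to $\mu$ after restricting the sup to preimages with $\mu(x),\mu(z)>0$, which is permitted because $x',z'\in Supp(f(\mu))$. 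Theorem $3.4$ then upgrades this pointwise bound to the full $(\in,\in\vee q)$-fuzzy bi-ideal property of $f(\mu)$.

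For $(2)$, the verification is direct: for $x,z\in Supp(f^{-1}(\mu'))$, $y\in S$ and $\alpha,\beta\in\Gamma$,
\begin{align*}
f^{-1}(\mu')(x\alpha y\beta z)&=\mu'(f(x)\alpha f(y)\beta f(z))\\
&\geq\min\{\mu'(f(x)),\mu'(f(z)),0.5\}\\
&=\min\{f^{-1}(\mu')(x),f^{-1}(\mu')(z),0.5\},
\end{align*}
whence Theorem $3.4$ again gives the result. Part $(3)$ will follow from $(1)$, $(2)$ together with the identities $f^{-1}(f(\mu))=\mu$ for any $f$-invariant $\mu$ and $f(f^{-1}(\mu'))=\mu'$ (from surjectivity), which exhibit $\mu'\mapsto f^{-1}(\mu')$ as the inverse of $\mu\mapsto f(\mu)$.

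The main bookkeeping obstacle will be the sup-separation in part $(1)$: one must justify that the outer sup over $w\in f^{-1}(x'\alpha y'\beta z')$ is bounded below by the product sup over independent preimages of $x'$ and $z'$ with $y$ fixed, and that the support hypothesis lets us confine attention to $\mu(x),\mu(z)>0$. This is precisely the bookkeeping already carried out in Theorem $3.8(1)$ and should transfer verbatim, which is why the authors remark that a similar argument suffices.
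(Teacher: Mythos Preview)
Your proposal is correct and follows precisely the route the paper intends: the paper's own proof of this theorem is the single sentence ``Using similar argument as in Theorem $3.8$ we can have the following Theorem,'' and you have faithfully spelled out that similar argument, invoking Theorem $3.4$ for the bi-ideal inequality and Theorem $3.8$ for the subsemigroup half. Your attention to surjectivity (needed to choose a preimage of the middle element $y'$, which is not assumed to lie in the support) is a detail the paper leaves implicit but which your write-up handles correctly.
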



\section{$(\alpha,\beta)$-Fuzzy Subsemigroup and $(\alpha,\beta)$-Fuzzy Bi-ideal}

In the following $\alpha,\beta$ denote any one of $\in,\in\vee q,\in \wedge q$ unless or otherwise mentioned.

In the following theorem it is shown that every fuzzy subsemigroup of a $\Gamma$-semigroup $S$ is an $(\in,\in)$-subsemigroup of $S.$

\begin{theorem}
For any fuzzy subset $\mu$ of a $\Gamma$-semigroup $S,$ the condition $(S1)$ in Definition $2.3$ is equivalent with the condition $(S2) \forall x,y\in S,\forall\gamma\in\Gamma,t,r\in(0,1],x_{t},y_{r}\in\mu\Rightarrow(x\gamma y)_{min(t,r)}\in\mu.$
\end{theorem}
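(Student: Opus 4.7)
The plan is to prove both implications directly, since the two conditions simply reformulate the same pointwise inequality $\mu(x\gamma y)\geq\min\{\mu(x),\mu(y)\}$ either globally or through arbitrary lower bounds $t,r$.

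For $(S1)\Rightarrow(S2)$, I would fix $x,y\in S$, $\gamma\in\Gamma$ and $t,r\in(0,1]$ with $x_t,y_r\in\mu$, unpack the definition of fuzzy point membership to obtain $\mu(x)\geq t$ and $\mu(y)\geq r$, and then apply (S1) to conclude
\[
\mu(x\gamma y)\geq\min\{\mu(x),\mu(y)\}\geq\min(t,r),
\]
which is exactly $(x\gamma y)_{\min(t,r)}\in\mu$.

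For the reverse direction $(S2)\Rightarrow(S1)$, the natural move is to specialize $t$ and $r$ to the actual values $\mu(x)$ and $\mu(y)$. Provided both are strictly positive, $x_{\mu(x)}$ and $y_{\mu(y)}$ lie in $\mu$, so (S2) hands me $(x\gamma y)_{\min(\mu(x),\mu(y))}\in\mu$, i.e., the desired inequality. I would handle the boundary case $\mu(x)=0$ or $\mu(y)=0$ separately: then $\min\{\mu(x),\mu(y)\}=0$ and the inequality $\mu(x\gamma y)\geq 0$ is automatic, since $\mu$ takes values in $[0,1]$.

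There is no real obstacle here; the only subtlety is remembering that a fuzzy point $x_t$ requires $t\neq 0$ (per Definition of Murali/fuzzy point), so the $\mu(x)=0$ case cannot be argued by plugging into (S2) directly and must be dispatched by the trivial bound. The argument is short enough that I would present it as a single paragraph per direction, without invoking any of the earlier theorems in Section 3.
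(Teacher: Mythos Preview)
Your proposal is correct and follows essentially the same route as the paper's proof: apply (S1) to bounds $t,r$ for the forward direction, and specialize $t=\mu(x)$, $r=\mu(y)$ for the converse. Your explicit treatment of the boundary case $\mu(x)=0$ or $\mu(y)=0$ is a refinement the paper omits (it silently writes $x_{\mu(x)},y_{\mu(y)}\in\mu$ without checking positivity), so your version is in fact slightly more careful.
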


\begin{proof} $(S1)\Rightarrow(S2):$ Let $(S1)$ holds. Let $x,y\in S,\gamma\in\Gamma$ and $t,r\in(0,1]$ be such that $x_{t},y_{r}\in\mu.$ Then $\mu(x)\geq t$ and $\mu(y)\geq r.$ So, by $(S1)$ we have $\mu(x\gamma y)\geq\min\{\mu(x),\mu(y)\}\geq\min\{t,r\}.$ Hence $(x\gamma y)_{\min(t,r)}\in\mu.$

$(S2)\Rightarrow(S1):$ Let $(S2)$ holds. Let $x,y\in S,\gamma\in\Gamma.$ Since $x_{\mu(x)},y_{\mu(y)}\in\mu,$ so by $(S2)$, we have $(x\gamma y)_{\min\{\mu(x),\mu(y)\}}\in\mu.$ Consequently, $\mu(x\gamma y)\geq\min\{\mu(x),\mu(y)\}.$ Hence the proof.
\end{proof}

In the following theorem it is shown that every fuzzy bi-ideal of a $\Gamma$-semigroup $S$ is an $(\in,\in)$-bi-ideal of $S.$

\begin{theorem}
For any fuzzy subset $\mu$ of a $\Gamma$-semigroup $S,$ the condition $(B1)$ in Definition $2.4$ is equivalent with the condition $(B2) \forall x,y,z\in S,\forall\gamma,\delta\in\Gamma,t,r\in(0,1],x_{t},z_{r}\in\mu,y\in S\Rightarrow(x\gamma y\delta z)_{\min(t,r)}\in\mu.$
\end{theorem}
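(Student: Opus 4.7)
The plan is to mirror the two-direction argument used in Theorem $4.1$, adapted to the bi-ideal situation where two fuzzy points $x_t$ and $z_r$ generate, via an arbitrary middle element $y\in S$ and two $\Gamma$-elements $\gamma,\delta$, the new point $(x\gamma y\delta z)_{\min(t,r)}$. The symmetry of the two conditions makes each direction essentially a one-line unwrapping of the definition of a fuzzy point belonging to $\mu$; no quasi-coincidence machinery is involved here since $\alpha=\beta=\in$.

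For the direction $(B1)\Rightarrow(B2)$, I would fix $x,y,z\in S$, $\gamma,\delta\in\Gamma$ and $t,r\in(0,1]$ with $x_t,z_r\in\mu$, so that $\mu(x)\ge t$ and $\mu(z)\ge r$. Applying $(B1)$ directly with the chosen $\gamma,\delta$ gives $\mu(x\gamma y\delta z)\ge\min\{\mu(x),\mu(z)\}\ge\min(t,r)$, which is precisely $(x\gamma y\delta z)_{\min(t,r)}\in\mu$.

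For the converse $(B2)\Rightarrow(B1)$, I would take arbitrary $x,y,z\in S$ and $\gamma,\delta\in\Gamma$ and observe the trivial memberships $x_{\mu(x)}\in\mu$ and $z_{\mu(z)}\in\mu$. (If either $\mu(x)$ or $\mu(z)$ is $0$, the bi-ideal inequality is vacuous, so we may assume both values lie in $(0,1]$.) Applying $(B2)$ with $t=\mu(x)$, $r=\mu(z)$ yields $(x\gamma y\delta z)_{\min\{\mu(x),\mu(z)\}}\in\mu$, i.e.\ $\mu(x\gamma y\delta z)\ge\min\{\mu(x),\mu(z)\}$, which is $(B1)$.

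There is no real obstacle: both implications are direct translations between the pointwise inequality form $(B1)$ and the fuzzy-point form $(B2)$. The only small care point is handling the case $\mu(x)=0$ or $\mu(z)=0$ in the second direction, since $(B2)$ is stated only for $t,r\in(0,1]$; this is dispensed with by the trivial observation that $\min\{\mu(x),\mu(z)\}=0$ makes the inequality automatic.
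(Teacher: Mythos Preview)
Your proposal is correct and follows essentially the same approach as the paper's own proof: both directions are handled by directly translating between the inequality form $(B1)$ and the fuzzy-point form $(B2)$, using $x_{\mu(x)},z_{\mu(z)}\in\mu$ for the converse. Your explicit handling of the degenerate case $\mu(x)=0$ or $\mu(z)=0$ is a small refinement the paper omits, but otherwise the arguments coincide.
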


\begin{proof} $(B1)\Rightarrow(B2):$ Let $(B1)$ holds. Let $x,y,z\in S,\gamma,\delta\in\Gamma$ and $t,r\in(0,1]$ be such that $x_{t},z_{r}\in\mu.$ Then $\mu(x)\geq t$ and $\mu(z)\geq r.$ So, by $(S1)$ we have $\mu(x\gamma y\delta z)\geq\min\{\mu(x),\mu(z)\}\geq\min\{t,r\}.$ Hence $(x\gamma y\delta z)_{\min(t,r)}\in\mu.$

$(B2)\Rightarrow(B1):$ Let $(B2)$ holds. Let $x,y,z\in S,\gamma,\delta\in\Gamma.$ Since $x_{\mu(x)},y_{\mu(z)}\in\mu,$ so by $(S2)$, we have $(x\gamma y\delta z)_{\min\{\mu(x),\mu(z)\}}\in\mu$. Consequently, $\mu(x\gamma y\delta z)\geq\min\{\mu(x),\mu(z)\}.$ Hence the proof.
\end{proof}

\begin{remark}
Let $\mu$ be a fuzzy subset of a $\Gamma$-semigroup $S$ such that $\mu(x)\leq 0.5,\forall x\in S.$ Let $x\in S$ and $t\in (0,1]$ be such that $x_{t}\in\wedge q\mu.$ Then $\mu(x)\geq t$ and $\mu(x)+t>1.$ It follows that
$1<\mu(x)+t\leq\mu(x)+\mu(x)=2\mu(x).$ Hence $\mu(x)>0.5.$ This means that $\{x_{t}:x_{t}\in\wedge q\mu\}=\phi.$
\end{remark}

\begin{definition}
$(S3)$ A non-empty fuzzy subset $\mu$ of a $\Gamma$-semigroup $S$ is called an $(\alpha,\beta)$-fuzzy subsemigroup of $S,$ where $\alpha\neq\in\wedge q,$ if it satisfies $\forall x,y\in S,\forall\gamma\in\Gamma,\forall t,r\in(0,1],x_{t},y_{r}\alpha\mu\Rightarrow(x\gamma y)_{\min(t,r)}\beta\mu.$
\end{definition}

\begin{definition}
$(B3)$ An $(\alpha,\beta)$-fuzzy subsemigroup $\mu$ of a $\Gamma$-semigroup $S$ is called an $(\alpha,\beta)$-fuzzy bi-ideal of $S,$ where $\alpha\neq\in\wedge q,$ if it satisfies $\forall x,y,z\in S,\forall\gamma,\delta\in\Gamma,\forall t,r\in(0,1],x_{t},z_{r}\alpha\mu\Rightarrow(x\gamma y\delta z)_{\min(t,r)}\beta\mu.$
\end{definition}

\begin{example}
Let $S=\{a,b,c,d,e\}$ and $\Gamma =\{\gamma \}$, where $\gamma $ is defined on $S$ with the following cayley table:
\begin{equation*}
\begin{array}{l|lllll}
\gamma & a & b & c & d & e\\ \hline
a & a & d & a & d & d\\
b & a & b & a & d & d\\
c & a & d & c & d & e\\
d & a & d & a & d & d\\
e & a & d & c & d & e%
\end{array}
\end{equation*}

Then $S$ is a $\Gamma $-semigroup. We define fuzzy subset $\mu:S\rightarrow [0,1],$ by $\mu(a)=0.8,\mu(b)=0.7,\mu(c)=0.3,\mu(d)=0.5,\mu(e)=0.6.$

Clearly $\mu$ is an $(\in,\in \vee q)$-fuzzy subsemigroup and $(\in, \in \vee q)$-fuzzy bi-ideal of $S.$

Let $a_{0.78}\in\mu$ and $b_{0.66}\in\mu.$ But $(a\gamma b)_{\min(0.78,0.66)}=d_{0.66}\overline{\in}.$ Hence $\mu$ is not an $(\in, \in)$-fuzzy subsemigroup and $(\in,\in)$-fuzzy bi-ideal of $S.$ In a similar way we can show that $\mu$ is not $(q,\in),(\in,q),(q,\in\vee q),(q,\in\wedge q),(\in\vee q,\in\wedge q),(\in\vee q,\in),(\in,\in\wedge q),(q,q),(\in\vee q,q),(\in \vee q,\in\vee q)$-fuzzy subsemigroup and fuzzy bi-ideal
of $S.$
\end{example}

\begin{theorem}
Every $(\in \vee q,\in \vee q)$-fuzzy subsemigroup of a $\Gamma$-semigroup $S$ is $(\in, \in \vee q)$-fuzzy subsemigroup of $S.$
\end{theorem}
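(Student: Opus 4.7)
The plan is to exploit the trivial implication that $\in$ is stronger than $\in \vee q$. First I would take arbitrary $x,y \in S$, $\gamma \in \Gamma$, and $t,r \in (0,1]$ satisfying the hypothesis of the $(\in, \in\vee q)$-condition, namely $x_t \in \mu$ and $y_r \in \mu$. By definition of the $\vee q$ relation, $x_t \in \mu$ immediately gives $x_t \in \vee q\, \mu$, and similarly $y_r \in \vee q\, \mu$. Hence the hypothesis of the assumed $(\in \vee q, \in \vee q)$-fuzzy subsemigroup condition is fulfilled, and we may directly conclude $(x\gamma y)_{\min(t,r)} \in \vee q\, \mu$, which is precisely the required conclusion for $\mu$ to be an $(\in, \in \vee q)$-fuzzy subsemigroup.

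Since the argument amounts to nothing more than inserting the trivial implication $x_t \in \mu \Rightarrow x_t \in \vee q\, \mu$ into the hypothesis of the stronger structural condition, there is no real obstacle; the only care needed is to verify that $\mu$ is non-empty (which is part of the definition of an $(\alpha,\beta)$-fuzzy subsemigroup, so is inherited) and to confirm that the definition $(S3)$ indeed permits $\alpha = \in \vee q$ (the exclusion in the definition is only on $\alpha = \in \wedge q$, so both cases here are legitimate). No computation, no case analysis, and no use of the threshold $0.5$ is required.
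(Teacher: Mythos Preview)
Your proposal is correct and follows essentially the same argument as the paper: assume $x_t,y_r\in\mu$, note that this implies $x_t,y_r\in\vee q\,\mu$, and then invoke the $(\in\vee q,\in\vee q)$-condition to obtain $(x\gamma y)_{\min(t,r)}\in\vee q\,\mu$. Your additional remarks on non-emptiness and the legitimacy of $\alpha=\in\vee q$ in Definition~$(S3)$ are accurate and simply make explicit what the paper leaves implicit.
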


\begin{proof}
Let $x,y\in S,\gamma\in\Gamma$ and $t,r\in (0,1]$ be such that $x_{t},y_{r}\in\mu.$ Then $x_{t},y_{r}\in\vee q,$ which implies that $(x\gamma y)_{\min(t,r)}\in\vee q.$ Hence $\mu$ is an $(\in, \in  \vee q)$-fuzzy subsemigroup of $S.$
\end{proof}

In a similar fashion we can have the following theorem.

\begin{theorem}
Every $(\in \vee q,\in \vee q)$-fuzzy bi-ideal of a $\Gamma$-semigroup $S$ is $(\in, \in \vee q)$-fuzzy bi-ideal of $S.$
\end{theorem}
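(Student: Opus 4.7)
The plan is to mimic the short proof of Theorem~4.6 almost verbatim, exploiting the obvious fact that $x_t\in\mu$ implies $x_t\in\vee q\,\mu$. The theorem to be established has two pieces, because by Definition~$(B3)$ an $(\alpha,\beta)$-fuzzy bi-ideal is first of all an $(\alpha,\beta)$-fuzzy subsemigroup and additionally satisfies the bi-ideal quasi-coincidence condition. So I would split the argument accordingly.

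First, I would observe that an $(\in\vee q,\in\vee q)$-fuzzy bi-ideal $\mu$ of $S$ is, by definition, already an $(\in\vee q,\in\vee q)$-fuzzy subsemigroup of $S$. Hence, by the previous theorem (Theorem~4.6), $\mu$ is an $(\in,\in\vee q)$-fuzzy subsemigroup of $S$. This disposes of the subsemigroup half of Definition~$(B3)$ for the pair $(\in,\in\vee q)$.

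Next I would verify the bi-ideal condition in the $(\in,\in\vee q)$ sense. Take arbitrary $x,y,z\in S$, $\gamma,\delta\in\Gamma$ and $t,r\in(0,1]$ with $x_t\in\mu$ and $z_r\in\mu$. Since $\mu(x)\geq t$ trivially implies $\mu(x)\geq t$ or $\mu(x)+t>1$, we have $x_t\in\vee q\,\mu$; likewise $z_r\in\vee q\,\mu$. Applying the defining property of an $(\in\vee q,\in\vee q)$-fuzzy bi-ideal to these two fuzzy points (with the inserted $y\in S$ and $\gamma,\delta\in\Gamma$) yields $(x\gamma y\delta z)_{\min(t,r)}\in\vee q\,\mu$, which is exactly what the $(\in,\in\vee q)$-fuzzy bi-ideal condition demands.

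There is no real obstacle here: the argument is essentially a two-line observation, identical in spirit to Theorem~4.6, with the only extra bookkeeping being the appeal to Theorem~4.6 itself to cover the subsemigroup clause built into Definition~$(B3)$. I would close by noting, as the author does after Theorem~4.6, that the proof is completely analogous to the subsemigroup case and simply insert the element $y\in S$ and the second $\Gamma$-factor wherever needed.
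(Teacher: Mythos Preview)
Your proposal is correct and follows essentially the same approach as the paper, which simply states that the result is obtained ``in a similar fashion'' to Theorem~4.6 without writing out a separate proof. Your explicit appeal to Theorem~4.6 for the subsemigroup clause built into Definition~$(B3)$ is a careful touch that the paper leaves implicit, but the core argument---$x_t\in\mu$ implies $x_t\in\vee q\,\mu$, then apply the $(\in\vee q,\in\vee q)$ hypothesis---is identical.
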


\begin{theorem}
Let $\mu$ be a non-zero $(\alpha,\beta)$-fuzzy subsemigroup of a $\Gamma$-semigroup $S.$ Then the set $\mu_{0}:=\{x\in S:\mu(x)>0\}$ is a subsemigroup of $S.$
\end{theorem}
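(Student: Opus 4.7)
The plan is to argue by cases on $\alpha \in \{\in, q, \in\vee q\}$, in each case producing values $t,r \in (0,1]$ with $\min(t,r) > 0$ such that $x_t\,\alpha\,\mu$ and $y_r\,\alpha\,\mu$ hold for an arbitrary pair $x,y \in \mu_0$ and arbitrary $\gamma \in \Gamma$; the defining implication then forces $(x\gamma y)_{\min(t,r)}\,\beta\,\mu$, and a second round of cases on $\beta \in \{\in, q, \in\vee q, \in\wedge q\}$ will yield $\mu(x\gamma y) > 0$, giving $x\gamma y \in \mu_0$.

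For the first round, the choice of $(t,r)$ depends on $\alpha$. If $\alpha$ is $\in$ or $\in\vee q$, I would take $t = \mu(x)$ and $r = \mu(y)$, both strictly positive by hypothesis; since then $\mu(x) \geq t$ and $\mu(y) \geq r$, we get $x_t \in \mu$ and $y_r \in \mu$, hence also $x_t\,(\in\vee q)\,\mu$ and $y_r\,(\in\vee q)\,\mu$. If instead $\alpha = q$, I would take $t = r = 1$; then $\mu(x) + 1 > 1$ and $\mu(y) + 1 > 1$ because $\mu(x),\mu(y) > 0$, so $x_1\,q\,\mu$ and $y_1\,q\,\mu$. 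In every admissible case for $\alpha$, $\min(t,r) > 0$.

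For the second round, from $(x\gamma y)_{\min(t,r)}\,\beta\,\mu$ with $s := \min(t,r) \in (0,1]$, I observe: if $\beta$ is $\in$, then $\mu(x\gamma y) \geq s > 0$; if $\beta$ is $q$, then $\mu(x\gamma y) > 1 - s \geq 0$, and the inequality is strict because $s > 0$, so again $\mu(x\gamma y) > 0$; the cases $\beta \in \{\in\vee q, \in\wedge q\}$ reduce to one (or both) of the previous two. Hence $\mu(x\gamma y) > 0$ in every case, so $x\gamma y \in \mu_0$, proving $\mu_0\,\Gamma\,\mu_0 \subseteq \mu_0$.

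The statement excludes $\alpha = \in\wedge q$ (as the definition requires), and this exclusion is precisely what the argument needs: by the preceding Remark, $\{x_t : x_t\,(\in\wedge q)\,\mu\}$ can be empty when $\mu \leq 0.5$, and then we would have no $x_t$ to feed into the implication. The only genuinely delicate step is handling $\alpha = q$, where we cannot use $t = \mu(x)$ (which may fail $\mu(x) + t > 1$); saturating $t = 1$ is the natural fix and the place where I expect a careful reader to want a sentence of justification. Apart from that, the argument is a routine unfolding of the definitions, essentially a uniform version of observations already used implicitly in Remark~3.1(1) and Corollary~3.3.
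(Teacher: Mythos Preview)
Your proof is correct and follows essentially the same approach as the paper's: both argue by cases on $\alpha$, taking $t=\mu(x)$, $r=\mu(y)$ when $\alpha\in\{\in,\in\vee q\}$ and $t=r=1$ when $\alpha=q$, and then checking that $(x\gamma y)_{\min(t,r)}\,\beta\,\mu$ forces $\mu(x\gamma y)>0$ for every admissible $\beta$. The only cosmetic difference is that the paper phrases the second step as a contradiction (assuming $\mu(x\gamma y)=0$ and observing $(x\gamma y)_{\min(t,r)}\overline{\beta}\mu$), while you argue directly; your parenthetical ``because $s>0$'' in the $\beta=q$ case is superfluous (the strictness of $\mu(x\gamma y)>1-s$ already yields $\mu(x\gamma y)>0$ since $1-s\geq 0$), but the conclusion stands.
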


\begin{proof}
Clearly $\mu_{0}$ is nonempty. Let $x,y\in\mu_{0},\gamma\in\Gamma.$ Then $\mu(x)>0$ and $\mu(y)>0.$ Let us assume that $\mu(x\gamma y)=0.$ If $\alpha\in\{\in,\in \vee q\}$ then $x_{\mu(x)}\alpha\mu$ and $y_{\mu(y)}\alpha\mu$ but $(x\gamma y)_{\min\{\mu(x),\mu(y)\}}\overline{\beta}\mu$ for every $\beta\in\{\in,q,\in\vee q,\in\wedge q\}$, a contradiction. If $\alpha=q$, it should be noted that $x_{1}q\mu$ and $y_{1}q\mu$ but $(x\gamma y)_{\min\{1,1\}}=(x\gamma y)_{1}\overline{\beta}\mu$ for every $\beta\in\{\in, q, \in \vee q,\in \wedge q\}$, a contradiction. Hence $\mu(x\gamma y)>0,$ consequently, $x\gamma y\in\mu_{0}.$
This completes the proof.
\end{proof}

In a similar way we can prove the following theorem.

\begin{theorem}
Let $\mu$ be a non-zero $(\alpha,\beta)$-fuzzy bi-ideal of a $\Gamma$-semigroup $S.$ Then the set $\mu_{0}:=\{x\in S:\mu(x)>0\}$ is a bi-ideal of $S.$
\end{theorem}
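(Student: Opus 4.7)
The plan is to follow the template of Theorem 4.8 very closely, adding one extra ingredient to capture the bi-ideal condition $\mu_0\Gamma S\Gamma\mu_0\subseteq\mu_0$. First I would note that by Definition $(B3)$ every $(\alpha,\beta)$-fuzzy bi-ideal is in particular an $(\alpha,\beta)$-fuzzy subsemigroup, so Theorem 4.8 already gives that $\mu_0$ is a subsemigroup of $S$. Consequently, only the bi-ideal inclusion needs to be established.

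To prove the inclusion, I would fix arbitrary $x,z\in\mu_0$, $y\in S$ and $\gamma,\delta\in\Gamma$, so that $\mu(x)>0$ and $\mu(z)>0$, and argue by contradiction assuming $\mu(x\gamma y\delta z)=0$. The argument then splits along the possible values of $\alpha$ (recall $\alpha\neq \in\wedge q$). If $\alpha\in\{\in,\in\vee q\}$, I would set $t=\mu(x)$ and $r=\mu(z)$; then $x_{t}\,\alpha\,\mu$ and $z_{r}\,\alpha\,\mu$, so the bi-ideal condition forces $(x\gamma y\delta z)_{\min(t,r)}\,\beta\,\mu$. However, since $\mu(x\gamma y\delta z)=0$, we have $\mu(x\gamma y\delta z)<\min(t,r)$ and $\mu(x\gamma y\delta z)+\min(t,r)=\min(t,r)\leq 1$, which rules out $\beta$ being any of $\in, q, \in\vee q, \in\wedge q$, a contradiction. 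If instead $\alpha=q$, taking $t=r=1$ gives $x_{1}q\mu$ and $z_{1}q\mu$ while $(x\gamma y\delta z)_{1}$ satisfies neither $\in$ nor $q$ with respect to $\mu$, again contradicting the requirement $(x\gamma y\delta z)_{1}\,\beta\,\mu$.

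Hence $\mu(x\gamma y\delta z)>0$, i.e.\ $x\gamma y\delta z\in\mu_0$, and combined with the subsemigroup property from Theorem 4.8 this establishes that $\mu_0$ is a bi-ideal of $S$. The only delicate point in the write-up is bookkeeping: the symbols $\alpha,\beta$ are already overloaded in the paper (fuzzy-point relations versus typical $\Gamma$-element names), so I would consistently use $\gamma,\delta$ for the $\Gamma$ elements inside the expression $x\gamma y\delta z$ to match Definition $(B3)$ and to avoid any notational ambiguity. Otherwise the proof is a direct adaptation of Theorem 4.8, and no new machinery is needed.
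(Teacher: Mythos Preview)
Your proposal is correct and follows exactly the template the paper intends: the paper does not write out a separate proof for this theorem but simply says ``In a similar way we can prove the following theorem'' after Theorem~4.8, and your adaptation (invoking Theorem~4.8 for the subsemigroup part, then repeating the contradiction argument with the bi-ideal clause of Definition~$(B3)$) is precisely that similar way. Your remark about using $\gamma,\delta$ for the $\Gamma$-elements to avoid clashing with the overloaded $\alpha,\beta$ is apt and matches the paper's own convention in Definition~$(B3)$.
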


\begin{proposition}
$\cite{S5}$ Let $S$ be a $\Gamma$-semigroup and $A,B\subseteq S.$ Then $(1)$ $A\subseteq B$ if and only if $\mu_{A}\subseteq\mu_{B}.$ $(2)$ $\mu_{A}\cap\mu_{B}=\mu_{A\cap B}.$ ${3}$ $\mu_{A}\circ\mu_{B}=\mu_{A\Gamma B},$ where $\mu_{A},\mu_{B}$ denote the characteristic functions of $A$ and $B$ respectively.
\end{proposition}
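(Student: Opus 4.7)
The plan is to handle all three identities by pointwise evaluation on $S$, exploiting the fact that characteristic functions take only the values $0$ and $1$. For part $(1)$, I would translate $\mu_{A}\subseteq\mu_{B}$ (pointwise inequality) into the statement that $\mu_{A}(x)=1$ forces $\mu_{B}(x)=1$, which is precisely $A\subseteq B$; the converse direction is equally immediate, since $x\in A$ gives $\mu_{A}(x)=1\leq\mu_{B}(x)$, forcing $\mu_{B}(x)=1$ and hence $x\in B$. For part $(2)$, I would compute $(\mu_{A}\cap\mu_{B})(x)=\min\{\mu_{A}(x),\mu_{B}(x)\}$ and observe that this equals $1$ exactly when $x$ belongs to both $A$ and $B$, recovering $\mu_{A\cap B}$.

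The bulk of the work is in part $(3)$, and I would prove it by a case split on whether $x\in A\Gamma B$. In the affirmative case I would produce a witness decomposition $x=a\gamma b$ with $a\in A$, $b\in B$, $\gamma\in\Gamma$, substitute it into the supremum in Definition $2.7$ to obtain $(\mu_{A}\circ\mu_{B})(x)\geq\min\{\mu_{A}(a),\mu_{B}(b)\}=1$, and conclude equality with $\mu_{A\Gamma B}(x)=1$. For $x\notin A\Gamma B$, I would argue that for every factorization $x=y\gamma z$ at least one of $y\notin A$ or $z\notin B$ must hold, so each $\min\{\mu_{A}(y),\mu_{B}(z)\}$ vanishes and the supremum is $0$, matching $\mu_{A\Gamma B}(x)=0$. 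If instead $x$ admits no factorization in $S$ at all, the "otherwise" branch of Definition $2.7$ returns $0$ directly, and the agreement still holds.

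The main (mild) obstacle will be keeping the subcases in $(3)$ straight: the piecewise nature of the product in Definition $2.7$ means I must separately address elements which admit some factorization $y\gamma z$ but lie outside $A\Gamma B$, and elements which admit no factorization in $S$ at all. Both subcases produce the value $0$, but for different reasons, and the argument has to name each reason explicitly. Beyond this small bookkeeping, the proof is essentially a routine transcription of set-theoretic membership and composition into the $\{0,1\}$-valued calculus of characteristic functions.
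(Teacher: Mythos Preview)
Your proposal is correct and follows the standard pointwise verification one would expect for these elementary identities. Note that the paper itself does not supply a proof of this proposition: it is quoted from the reference \cite{S5}, so there is no in-paper argument to compare against. Your case analysis for part~$(3)$---distinguishing $x\in A\Gamma B$, $x\notin A\Gamma B$ but factorizable, and $x$ not factorizable at all---is exactly the right bookkeeping and leaves no gaps.

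One very minor wording issue: in part~$(1)$ your sentence for ``the converse direction'' actually re-argues the forward implication $\mu_{A}\subseteq\mu_{B}\Rightarrow A\subseteq B$ (you assume $\mu_{A}(x)\leq\mu_{B}(x)$ to conclude $x\in B$). For the genuine converse $A\subseteq B\Rightarrow\mu_{A}\subseteq\mu_{B}$, you want: if $x\in A$ then $x\in B$ so $\mu_{A}(x)=1=\mu_{B}(x)$, while if $x\notin A$ then $\mu_{A}(x)=0\leq\mu_{B}(x)$. This is a trivial fix and does not affect the validity of your plan.
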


\begin{definition}
Let $S$ be a $\Gamma$-semigroup. Then for any $a\in S$ and $\gamma\in\Gamma,$ we define $A_{a}:=\{(y,z)\in S\times S:a=y\gamma z$\}. For any two fuzzy subset $\mu_{1},\mu_{2}$ of $S,$ we define $(\mu _{1}\circ \mu _{2})(a)$ $:=\left\{
\begin{array}{l}
\underset{(y,z)\in A_{a}}{\sup }\min \{\mu _{1}(y),\mu _{2}(z)\}\text{ \ if }%
A_{a}\neq \phi  \\
\text{ \ \ \ \ \ \ \ \ \ \ \ \ }0\text{\ \ \ \ \ \ \ \ \ \ \ \ \
\ \ \ \ \ \ \ \ if }A_{a}=\phi
\end{array}%
\right. .$
\end{definition}

\begin{definition}
Let $S$ be a $\Gamma$-semigroup. For any two fuzzy subsets $\mu_{1}$ and $\mu_{2}$ of $S,$ we define the $0.5$-product of $\mu_{1}$ and $\mu_{2}$ by,\\ $(\mu _{1}\circ _{0.5}\mu _{2})(a)$
$:=\left\{
\begin{array}{l}
\underset{(y,z)\in A_{a}}{\sup }\min \{\mu _{1}(y),\mu
_{2}(z),0.5\}\text{ \
if }A_{a}\neq \phi  \\
\text{ \ \ \ \ \ \ \ \ \ \ \ \ }0\text{\ \ \ \ \ \ \ \ \ \ \ \ \
\ \ \ \ \ \ \ \ \ \ \ \ \ if }A_{a}=\phi
\end{array}%
\right. .$
\end{definition}

\begin{definition}
\cite{Yu1} For a $\Gamma$-semigroup $S$ the fuzzy subset $0($respectively $1)$ is defined by $0: S\rightarrow[0,1]:x\rightarrow 0(x)=0$, $\forall x\in\ S($respectively $1: S\rightarrow[0,1]:x\rightarrow 1(x)=1$ $\forall x\in\ S).$
\end{definition}

\begin{definition}
For any fuzzy subset $\mu$ of a $\Gamma$-semigroup $S$ and $t\in(0,1],$ we have $Q(\mu;t):=\{x\in S:x_{t}q \mu\}$ and $[\mu]_{t}:=\{x\in S:x_{t}\in\vee q\mu\}.$ It is clear that $[\mu]_{t}=U(\mu;t)\cup Q(\mu;t).$
\end{definition}

\begin{definition}
Let $\mu_{1},\mu_{2}$ be any two fuzzy subsets of a $\Gamma$-semigroup $S.$ Then $\forall x\in S,(\mu_{1}\cap_{0.5}\mu_{2})(x)$=min\{$\mu_{1}(x),\mu_{2}(x),0.5$\}.
\end{definition}

\begin{proposition}
Let S be a $\Gamma$-semigroup and $A,B\subseteq S$. Then $(1)$ $\mu_{A}\cap_{0.5}\mu_{B}=\mu_{A\cap B}\cap 0.5_{S}.$ $(2)$ $\mu_{A}\circ_{0.5}\mu_{B}=\mu_{A\Gamma B}\cap 0.5_{S}.$
\end{proposition}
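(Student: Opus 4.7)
The plan is to prove each equality pointwise by unfolding the definitions of $\cap_{0.5}$, $\circ_{0.5}$, and the characteristic function $\mu_A$, and then splitting into cases according to whether the point under consideration lies in the relevant subset. Because characteristic functions take values in $\{0,1\}$, every minimum collapses to either $0$ or to $0.5$ (after capping by $0.5$), so the two sides will turn out to agree by inspection.

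For part $(1)$, I fix $x\in S$ and evaluate $(\mu_A\cap_{0.5}\mu_B)(x)=\min\{\mu_A(x),\mu_B(x),0.5\}$. If $x\in A\cap B$, both $\mu_A(x)$ and $\mu_B(x)$ equal $1$, so the minimum is $0.5$; on the right, $\mu_{A\cap B}(x)=1$, hence $(\mu_{A\cap B}\cap 0.5_S)(x)=0.5$. If $x\notin A\cap B$, at least one of $\mu_A(x),\mu_B(x)$ is $0$, so the left side is $0$; and $\mu_{A\cap B}(x)=0$, so the right side is also $0$. This settles $(1)$.

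For part $(2)$, I again fix $x\in S$ and analyse the two sides. If $x\in A\Gamma B$, by definition there exist $y\in A$, $z\in B$ and $\gamma\in\Gamma$ with $x=y\gamma z$, so $(y,z)\in A_x\ne\emptyset$ and $\min\{\mu_A(y),\mu_B(z),0.5\}=0.5$. Since every term in the supremum is at most $0.5$, we get $(\mu_A\circ_{0.5}\mu_B)(x)=0.5$, matching $\min\{\mu_{A\Gamma B}(x),0.5\}=0.5$. If $x\notin A\Gamma B$, then for every factorisation $x=y\gamma z$ we must have $y\notin A$ or $z\notin B$, so $\min\{\mu_A(y),\mu_B(z),0.5\}=0$; consequently $(\mu_A\circ_{0.5}\mu_B)(x)=0$ (whether $A_x$ is empty or not), agreeing with $\mu_{A\Gamma B}(x)\wedge 0.5=0$.

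There is no genuine obstacle here: the only mild subtlety is the boundary case $A_x=\emptyset$ in $(2)$, which must be handled separately to match the piecewise definition of $\circ_{0.5}$, but it is automatic since $A_x=\emptyset$ forces $x\notin A\Gamma B$. Everything else is bookkeeping with the convention that $\mu_A$ takes only the values $0$ and $1$, so I would present the proof as two short case analyses, one for each part, with no further machinery invoked beyond Definitions of $\circ_{0.5}$, $\cap_{0.5}$, and $0.5_S$.
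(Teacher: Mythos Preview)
Your proof is correct and follows essentially the same approach as the paper: a pointwise case analysis according to whether $x$ lies in $A\cap B$ (for part $(1)$) or in $A\Gamma B$ (for part $(2)$), using that characteristic functions take only the values $0$ and $1$. The paper phrases the case split in terms of whether one side of the equality is zero or nonzero, but this is equivalent to your membership split; your explicit treatment of the boundary case $A_x=\emptyset$ is, if anything, slightly more careful than the paper's.
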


\begin{proof}
$(1)$ Let $x\in S$. If $(\mu_{A}\cap_{0.5}\mu_{B})(x)=0\Rightarrow\min\{\mu_{A}(x),\mu_{B}(x),0.5\}=0$. Then $\mu_{A}(x)=0$ or $\mu_{B}(x)=0\Rightarrow x\notin A\cap B$. Then $\mu_{A\cap B}(x)=0\Rightarrow(\mu_{A\cap B}\cap 0.5_{S})(x)=0$. If $(\mu_{A}\cap_{0.5}\mu_{B})(x)\neq 0$, then $(\mu_{A}\cap_{0.5}\mu_{B})(x)=\min\{\mu_{A}(x),\mu_{B}(x),0.5\}=0.5$. Then $\mu_{A}(x)=1=\mu_{B}(x)\Rightarrow x\in A\cap B$ $\Rightarrow \mu_{A\cap B}(x)=1$. Therefore $(\mu_{A\cap B}\cap 0.5_{S})(x)=\min\{\mu_{A\cap B}(x),0.5\}=\min\{1,0.5\}=0.5$. Then $(\mu_{A}\cap_{0.5}\mu_{B})(x)=(\mu_{A\cap B}\cap 0.5_{S})(x),\forall x\in S$. Hence $\mu_{A}\cap_{0.5}\mu_{B}=\mu_{A\cap B}\cap 0.5_{S}$.

$(2)$ If $(\mu_{A\Gamma B}\cap 0.5_{S})(x)=0\Rightarrow \min\{\mu_{A\Gamma B}(x), 0.5\}=0$. Then $\mu_{A\Gamma B}(x)=0\Rightarrow x\notin A\Gamma B$. Then $x\neq a\alpha b$ $\forall a\in A,b\in B,\alpha\in \Gamma$. Then $(\mu_{A}\circ_{0.5}\mu_{B})(x)=0$. If $(\mu_{A\Gamma B}\cap 0.5_{S})(x)\neq 0$, then $(\mu_{A\Gamma B}\cap 0.5_{S})(x)=\min\{\mu_{A\Gamma B}(x),0.5\}=0.5$. Then $\mu_{A\Gamma B}(x)=1\Rightarrow x\in A\Gamma B$. Then $(\mu_{A}\circ_{0.5}\mu_{B})(x)=\underset{(u,v)\in A_{x}}{\sup}\min\{\mu_{A}(u),\mu_{B}(v),0.5\}=0.5$. Then $(\mu_{A}\circ_{0.5}\mu_{B})(x)=(\mu_{A\Gamma B}\cap 0.5_{S})(x)\forall x\in S$. Hence $\mu_{A}\circ_{0.5}\mu_{B}=\mu_{A\Gamma B}\cap 0.5_{S}$.
\end{proof}

\begin{proposition}
Let $\mu$ and $\nu$ be $(\in, \in \vee q)$-fuzzy subsemigroups of a $\Gamma$-semigroup $S.$ Then $\mu\circ_{0.5}\nu$ is an $(\in, \in \vee q)$-fuzzy bi-ideal of $S$ if any one of $\mu$ and $\nu$ be $(\in, \in \vee q)$-fuzzy bi-ideal of $S.$
\end{proposition}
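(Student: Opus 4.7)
The plan is to verify the second-condition characterizations of Theorem $3.2$ and Theorem $3.3$ for $\omega:=\mu\circ_{0.5}\nu$. Observe first that $\omega(a)\leq 0.5$ for every $a\in S$ by construction, so the $0.5$-thresholds in those characterizations are automatic once $\omega$-terms appear inside the $\min$; it suffices to prove $\omega(x\gamma y)\geq\min\{\omega(x),\omega(y)\}$ for all $x,y\in Supp(\omega),\gamma\in\Gamma$, and $\omega(x\alpha y\beta z)\geq\min\{\omega(x),\omega(z)\}$ for all $x,z\in Supp(\omega), y\in S,\alpha,\beta\in\Gamma$. Since $x\in Supp(\omega)$ forces $A_x\neq\phi$ with some decomposition $x=p\delta q$ satisfying $\mu(p),\nu(q)>0$, I may always draw decompositions $x=p_1\alpha_1 q_1$ and $y=p_2\alpha_2 q_2$ (or $z=p_2\alpha_2 q_2$) from $A_x$ and $A_y$ (respectively $A_z$) whose weights $\min\{\mu(p_i),\nu(q_i),0.5\}$ approximate $\omega(x),\omega(y)$ (resp.\ $\omega(z)$) from below.

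The core idea is to rebracket the whole product so that one outer factor is controlled by $\mu$ and the other by $\nu$, and then invoke the bi-ideal property of whichever of $\mu,\nu$ is assumed to be a bi-ideal. For the subsemigroup inequality we have $x\gamma y=p_1\alpha_1 q_1\gamma p_2\alpha_2 q_2$. If $\nu$ is the bi-ideal, rebracket as $p_1\alpha_1(q_1\gamma p_2\alpha_2 q_2)$; since $q_1,q_2\in Supp(\nu)$, Theorem $3.3$ applied to $\nu$ on the endpoints $q_1,q_2$ of the middle block yields $\nu(q_1\gamma p_2\alpha_2 q_2)\geq\min\{\nu(q_1),\nu(q_2),0.5\}$, and so $\omega(x\gamma y)\geq\min\{\mu(p_1),\nu(q_1),\nu(q_2),0.5\}$. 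If $\mu$ is the bi-ideal, instead rebracket as $(p_1\alpha_1 q_1\gamma p_2)\alpha_2 q_2$ and apply Theorem $3.3$ to $\mu$ on $p_1,p_2$, giving $\omega(x\gamma y)\geq\min\{\mu(p_1),\mu(p_2),\nu(q_2),0.5\}$. The bi-ideal inequality for $\omega$ is handled identically with a $y\in S$ inserted in the middle: the rebracketed middle block becomes either $q_1\alpha(y\beta p_2)\alpha_2 q_2$ or $p_1\alpha_1(q_1\alpha y)\beta p_2$, and Theorem $3.3$ for $\nu$ or $\mu$ applies verbatim after identifying the single middle element $y\beta p_2$ or $q_1\alpha y$.

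Having established these pointwise lower bounds, I would then take suprema over the decompositions of $x$ and $y$ (resp.\ $z$). In every case the minimand splits into a part depending only on the decomposition of $x$ and a part depending only on the decomposition of $y$ (resp.\ $z$), so the $\sup$ passes through the $\min$ factor by factor, and the trivial bounds $\nu(q_i)\geq\min\{\mu(p_i),\nu(q_i),0.5\}$ and $\mu(p_i)\geq\min\{\mu(p_i),\nu(q_i),0.5\}$ collapse the resulting suprema to $\omega(x),\omega(y)$ (resp.\ $\omega(z)$). Combined with $\omega\leq 0.5$, this yields exactly the Theorem $3.2(2)$ and Theorem $3.3(2)$ inequalities for $\omega$. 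The main obstacle I anticipate is the clean bookkeeping of this $\sup$-of-$\min$ factorisation; the argument hinges on the fact that after the rebracketing the contributions from the two halves of the product become independent, and the $0.5$-cap built into $\circ_{0.5}$ is precisely what makes this factorisation behave uniformly. Everything else reduces to associativity in the $\Gamma$-semigroup and direct application of the characterizations from Section $3$.
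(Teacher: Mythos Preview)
Your proof is correct and takes essentially the same approach as the paper: both hinge on the associativity rebracketing $p_1\alpha_1 q_1\gamma p_2\alpha_2 q_2=(p_1\alpha_1 q_1\gamma p_2)\alpha_2 q_2$ (in the case where $\mu$ is the bi-ideal) so that the bi-ideal inequality for $\mu$ controls the left factor and $\nu$ the right, followed by the same sup-of-min bookkeeping. The only difference is cosmetic---the paper packages the computation as the containments $(\mu\circ_{0.5}\nu)\circ_{0.5}(\mu\circ_{0.5}\nu)\subseteq\mu\circ_{0.5}\nu$ and $(\mu\circ_{0.5}\nu)\circ_{0.5}1\circ_{0.5}(\mu\circ_{0.5}\nu)\subseteq\mu\circ_{0.5}\nu$ rather than the pointwise inequalities of Theorems~$3.2(2)$ and $3.3(2)$, and it only writes out the case where the left factor $\mu$ is the bi-ideal.
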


\begin{proof}
Let $\mu$ be $(\in, \in \vee q)$-fuzzy bi-ideal of S and $a\in S.$ If $((\mu\circ_{0.5}\nu)\circ_{0.5}(\mu\circ_{0.5}\nu))(a)=0 \leq(\mu\circ_{0.5}\nu)(a)$. If $((\mu\circ_{0.5}\nu)\circ_{0.5}(\mu\circ_{0.5}\nu))(a)\neq0$, then $A_{a}\neq\phi$ and there exist $x,y,z,r,s,t\in S,\alpha,\beta,\gamma\in\Gamma$ such that $a=x\alpha y,x=z\beta r,y=s\gamma t.$ Then $a=(z\beta r)\alpha(s\gamma t)=(z\beta r\alpha s)\gamma t$. Consequently, $(z\beta r\alpha s,t)\in A_{a}.$ Then
\begin{align*}
((\mu\circ_{0.5}\nu)\circ_{0.5}(\mu\circ_{0.5}\nu))(a)&=\underset{(x,y)\in
A_{a}}{\sup}\min\{(\mu\circ_{0.5}\nu)(x),(\mu\circ_{0.5}\nu)(y),0.5\}\\
&=\underset{(x,y)\in A_{a}}{\sup}\min[\underset{(z,r)\in A_{x}}{\sup} \min\{\mu(z),\nu(r),0.5\},\\ &\underset{(s,t)\in
A_{y}}{\sup}\min\{\mu(s),\nu(t),0.5\},0.5]\\
&=\underset{(x,y)\in A_{a}}{\sup}\underset{(z,r)\in A_{x}}{\sup}\underset{(s,t)\in
A_{y}}{\sup}\min\{\mu(z),\mu(s),\nu(r),\nu(t),0.5\}\\
&\leq\underset{(x,y)\in A_{a}}{\sup}\underset{(z,r)\in A_{x}}{\sup}\underset{(s,t)\in
A_{y}}{\sup}\min\{\mu(z),\mu(s),\nu(t),0.5\}
\end{align*}
\begin{align*}
&\leq\underset{(z\beta r\alpha s,t)\in A_{a}}{\sup}\min\{\mu(z),\mu(s),\nu(t),0.5\}\\
&\leq\underset{(z\beta r\alpha s,t)\in A_{a}}{\sup}\min\{\mu(z\beta r\alpha
s),\nu(t),0.5\}\\
&\leq\underset{(x,y)\in A_{a}}{\sup}\min\{\mu(x),\nu(y),0.5\}\\
&=(\mu\circ_{0.5}\nu)(a).
\end{align*}
Then $(\mu\circ_{0.5}\nu)\circ_{0.5}(\mu\circ_{0.5}\nu)\subseteq\mu\circ_{0.5}\nu$. Hence $\mu\circ_{0.5}\nu$ is an $(\in, \in \vee q)$-fuzzy subsemigroup of $S.$

Now if $((\mu\circ_{0.5}\nu)\circ_{0.5}1\circ_{0.5}(\mu\circ_{0.5}\nu))(a)=0\leq(\mu\circ_{0.5}\nu)(a)$. If $((\mu\circ_{0.5}\nu)\circ_{0.5}1\circ_{0.5}(\mu\circ_{0.5}\nu))(a)\neq0$, $A_{a}\neq\phi$. Then there exists $x,y,w,z,b,c,d,e\in S,\alpha,\beta,\gamma,\delta\in\Gamma$ such that $a=x\alpha y,x=w\beta z,y=b\gamma c, c=d\delta e$. Then $a=(w\beta z)\alpha(b\gamma c)=w\beta z\alpha b\gamma(d\delta e)=(w\beta z\alpha b\gamma d)\delta e$. Consequently, $(w\beta z\alpha b\gamma d,e)\in A_{a}$. Then
\begin{align*}
((\mu\circ_{0.5}\nu)\circ_{0.5}1\circ_{0.5}(\mu\circ_{0.5}\nu))(a)&=\underset{(x,y)\in A_{a}}{\sup}\min[(\mu\circ_{0.5}\nu)(x),(1\circ_{0.5}(\mu\circ_{0.5}\nu))(y),0.5]\\
&=\underset{(x,y)\in A_{a}}{\sup}\min[\underset{(w,z)\in A_{x}}{\sup}\min\{\mu(w),\nu(z),0.5\},\\
&\underset{(b,c)\in A_{y}}{\sup}\min\{1(b),(\mu\circ_{0.5}\nu)(c),0.5\},0.5]\\
&=\underset{(x,y)\in A_{a}}{\sup}\min[\underset{(w,z)\in A_{x}}{\sup}\min\{\mu(w),\nu(z),0.5\},\\
&\underset{(b,c)\in A_{y}}{\sup}\min\{\underset{(d,e)\in A_{c}}{\sup}\min\{\mu(d),\nu(e),0.5\},0.5\},0.5]\\
&=\underset{(x,y)\in A_{a}}{\sup}\underset{(w,z)\in A_{x}}{\sup}\underset{(b,c)\in A_{y}}{\sup}
\underset{(d,e)\in A_{c}}{\sup}\min\{\mu(w),\mu(d),\nu(z),\\
&\nu(e),0.5\}\\
&\leq\underset{(x,y)\in A_{a}}{\sup}\underset{(w,z)\in A_{x}}{\sup}\underset{(b,c)\in A_{y}}{\sup}
\underset{(d,e)\in A_{c}}{\sup}\min\{\mu(w),\mu(d),\nu(e),\\
&0.5\}\\
&\leq\underset{(w\beta z\alpha b\gamma d,e)\in A_{a}}{\sup}\min\{\mu(w\beta z\alpha b\gamma d),\nu(e),0.5\}\\
&\leq\underset{(x,y)\in A_{a}}{\sup}\min\{\mu(x),\nu(y),0.5\}\\
&=(\mu\circ_{0.5}\nu)(a).
\end{align*}
Then
$(\mu\circ_{0.5}\nu)\circ_{0.5}1\circ_{0.5}(\mu\circ_{0.5}\nu)\subseteq\mu\circ_{0.5}\nu$. Hence $\mu\circ_{0.5}\nu$ is an $(\in, \in \vee q)$-fuzzy bi-ideal of $S.$
\end{proof}

\begin{proposition}
If $\mu_{1},\mu_{2}$ be any two $(\in,\in \vee q)$-fuzzy subsemigroups$($fuzzy bi-ideals$)$ of a $\Gamma$-semigroup $S,$ then $(\mu_{1}\cap_{0.5}\mu_{2})$ is an $(\in, \in \vee q)$-fuzzy subsemigroup$($resp. fuzzy bi-ideal$)$ of $S.$
\end{proposition}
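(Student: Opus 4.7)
The plan is to reduce both statements to the equivalent characterizations in Theorem~$3.2$ and Theorem~$3.4$, which replace the awkward fuzzy-point formulation by the concrete inequality involving $\min\{\cdot,\cdot,0.5\}$ on supports. Writing $\nu=\mu_{1}\cap_{0.5}\mu_{2}$, I first note that $\mathrm{Supp}(\nu)=\mathrm{Supp}(\mu_{1})\cap\mathrm{Supp}(\mu_{2})$, since $\nu(x)>0$ iff $\mu_{1}(x)>0$ and $\mu_{2}(x)>0$. This is what lets me feed the hypotheses on $\mu_{1}$ and $\mu_{2}$ back into Theorem~$3.2(2)$ at any point of $\mathrm{Supp}(\nu)$.

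For the subsemigroup case, I take $x,y\in\mathrm{Supp}(\nu)$ and $\gamma\in\Gamma$. Applying Theorem~$3.2(2)$ to $\mu_{1}$ and $\mu_{2}$ separately gives $\mu_{i}(x\gamma y)\geq\min\{\mu_{i}(x),\mu_{i}(y),0.5\}$ for $i=1,2$. Then
\begin{align*}
\nu(x\gamma y)
&=\min\{\mu_{1}(x\gamma y),\mu_{2}(x\gamma y),0.5\}\\
&\geq\min\{\mu_{1}(x),\mu_{1}(y),\mu_{2}(x),\mu_{2}(y),0.5\}\\
&=\min\{\nu(x),\nu(y)\}\geq\min\{\nu(x),\nu(y),0.5\},
\end{align*}
where the penultimate equality uses that the $0.5$ already sits inside both $\nu(x)$ and $\nu(y)$ (so each is $\leq 0.5$ and the outer $0.5$ is absorbed). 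Theorem~$3.2$ then yields that $\nu$ is an $(\in,\in\vee q)$-fuzzy subsemigroup.

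For the bi-ideal case, I first invoke the subsemigroup case just proved to get the subsemigroup condition on $\nu$. Then I apply Theorem~$3.4(2)$: for $x,z\in\mathrm{Supp}(\nu)$, $y\in S$ and $\alpha,\beta\in\Gamma$, the same kind of computation yields $\mu_{i}(x\alpha y\beta z)\geq\min\{\mu_{i}(x),\mu_{i}(z),0.5\}$ and hence $\nu(x\alpha y\beta z)\geq\min\{\nu(x),\nu(z),0.5\}$; applying Theorem~$3.4$ in the reverse direction finishes the proof.

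There is really no serious obstacle — the only thing that needs a moment of care is the bookkeeping of the $0.5$ factor, namely that $\min\{\min\{a_{1},b_{1},0.5\},\min\{a_{2},b_{2},0.5\},0.5\}$ collapses to $\min\{a_{1},a_{2},b_{1},b_{2},0.5\}$, which in turn equals $\min\{\nu(x),\nu(y)\}$ (or $\min\{\nu(x),\nu(z)\}$ in the bi-ideal case). Once this absorption identity is isolated, both halves of the proposition are one-line applications of the characterization theorems.
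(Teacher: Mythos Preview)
Your proposal is correct and takes essentially the same approach as the paper: both arguments reduce to the characterization in Theorem~3.2(2) (and Theorem~3.4(2) for the bi-ideal case) and then perform the same $\min$-absorption computation $\min\{\mu_1(x\gamma y),\mu_2(x\gamma y),0.5\}\geq\min\{\mu_1(x),\mu_1(y),\mu_2(x),\mu_2(y),0.5\}=\min\{\nu(x),\nu(y),0.5\}$. The only cosmetic difference is that you restrict attention to $x,y\in\mathrm{Supp}(\nu)$ (matching the literal hypothesis of Theorem~3.2(2)) whereas the paper works over all of $S$, which is harmless since the inequality is trivial when either point lies outside the support.
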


\begin{proof}
Let $\mu_{1},\mu_{2}$ be any two $(\in, \in \vee q)$-fuzzy subsemigroups of $S$ and $x,y\in S,\gamma\in\Gamma.$ Then
\begin{align*}
(\mu_{1}\cap_{0.5}\mu_{2})(x\gamma y)&=\min\{\mu_{1}(x\gamma y),\mu_{2}(x\gamma y),0.5\}\\
&\geq\min\{\min\{\mu_{1}(x),\mu_{1}(y),0.5\},\min\{\mu_{2}(x),\mu_{2}(y),0.5\},0.5\}\\
&=\min\{\min\{\mu_{1}(x),\mu_{1}(y),\mu_{2}(x),\mu_{2}(y)\},0.5\}\\
&=\min\{\min\{\mu_{1}(x),\mu_{2}(x),0.5\},\min\{\mu_{1}(y),\mu_{2}(y),0.5\},0.5\}\\
&=\min\{(\mu_{1}\cap_{0.5}\mu_{2})(x),(\mu_{1}\cap_{0.5}\mu_{2})(y),0.5\}.
\end{align*}
Hence $(\mu_{1}\cap_{0.5}\mu_{2})$ is an $(\in, \in \vee q)$-fuzzy subsemigroup of $S.$ Similarly we can prove the other case also.
\end{proof}

\begin{definition}
Let $S$ be a $\Gamma$-semigroup and $\mu$ be a non-empty fuzzy subset of $S.$ Then $\mu$ is called $(\in, \in \vee q)$-fuzzy left ideal$($fuzzy right ideal$)$ of $S$ if $\forall x,y\in S,\forall\gamma\in\Gamma,\mu(x\gamma y)\geq\min\{\mu(y),0.5\}($resp. $\mu(x\gamma y)\geq\min\{\mu(x),0.5\})$.

A non empty fuzzy subset $\mu$ of a $\Gamma$-semigroup $S$ is called $(\in, \in \vee q)$-fuzzy ideal of $S$ if it is both $(\in, \in \vee q)$-fuzzy left ideal and $(\in, \in \vee q)$-fuzzy right ideal of $S.$
\end{definition}

\begin{proposition}
Let $S$ be a $\Gamma$-semigroup. Then every one sided $(\in,\in\vee q)$-fuzzy ideal of $S$ is an $(\in, \in \vee q)$-fuzzy bi-ideal of $S.$
\end{proposition}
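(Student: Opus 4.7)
The plan is to reduce the claim to the characterization given in Theorem 3.3, specifically condition (2) there, which says that an $(\in,\in\vee q)$-fuzzy subsemigroup $\mu$ is an $(\in,\in\vee q)$-fuzzy bi-ideal iff $\mu(x\alpha y\beta z)\geq\min\{\mu(x),\mu(z),0.5\}$ for $x,z\in Supp(\mu)$, $y\in S$, $\alpha,\beta\in\Gamma$. So I would first verify that any one-sided $(\in,\in\vee q)$-fuzzy ideal $\mu$ of $S$ is automatically an $(\in,\in\vee q)$-fuzzy subsemigroup, then verify the bi-ideal inequality.

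First I would establish the subsemigroup property by invoking Theorem 3.2(2). Fix $x,y\in Supp(\mu)$ and $\gamma\in\Gamma$. If $\mu$ is an $(\in,\in\vee q)$-fuzzy right ideal, then directly from the defining inequality,
$\mu(x\gamma y)\geq\min\{\mu(x),0.5\}\geq\min\{\mu(x),\mu(y),0.5\}$,
and dually if $\mu$ is a left ideal we use $\mu(x\gamma y)\geq\min\{\mu(y),0.5\}\geq\min\{\mu(x),\mu(y),0.5\}$. By Theorem 3.2, $\mu$ is an $(\in,\in\vee q)$-fuzzy subsemigroup of $S$.

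Second, I would verify condition (2) of Theorem 3.3. The key trick is to use associativity of the $\Gamma$-semigroup product to regroup $x\alpha y\beta z$ in the way that best matches the one-sided hypothesis. If $\mu$ is a right ideal, write $x\alpha y\beta z=(x\alpha y)\beta z$ and apply the right-ideal inequality twice (once on the outer product and once on $x\alpha y$) to obtain
$\mu(x\alpha y\beta z)\geq\min\{\mu(x\alpha y),0.5\}\geq\min\{\min\{\mu(x),0.5\},0.5\}=\min\{\mu(x),0.5\}\geq\min\{\mu(x),\mu(z),0.5\}$.
If $\mu$ is a left ideal, instead write $x\alpha y\beta z=x\alpha(y\beta z)$ and apply the left-ideal inequality analogously to bound $\mu(x\alpha y\beta z)\geq\min\{\mu(z),0.5\}\geq\min\{\mu(x),\mu(z),0.5\}$. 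Either way, Theorem 3.3 yields that $\mu$ is an $(\in,\in\vee q)$-fuzzy bi-ideal.

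There is no genuine obstacle in the argument; the only care required is the choice of associative regrouping for each of the two cases (right versus left), which is what makes the one-sided defining inequality strong enough to dominate $\min\{\mu(x),\mu(z),0.5\}$ despite involving only one of $\mu(x)$ or $\mu(z)$.
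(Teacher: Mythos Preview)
Your proof is correct and follows essentially the same approach as the paper: use the one-sided ideal inequality together with an appropriate associative regrouping to obtain $\mu(x\alpha y\beta z)\geq\min\{\mu(x),\mu(z),0.5\}$. The only minor differences are that you explicitly verify the subsemigroup condition (the paper omits this step) and, in the right-ideal case, you regroup as $(x\alpha y)\beta z$ and apply the inequality twice, whereas the paper regroups as $x\alpha(y\beta z)$ and applies it once---both are equally valid.
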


\begin{proof}
Let $\mu$ be an $(\in, \in \vee q)$-fuzzy right ideal of $S$ and let $x,y,z\in S,\alpha,\beta\in\Gamma.$ Then $\mu(x\alpha y\beta z)=\mu(x\alpha (y\beta z))\geq\min\{\mu(x),0.5\}\geq\min\{\mu(x),\mu(z),0.5\}.$ Hence
$\mu$ is an $(\in,\in \vee q)$-fuzzy bi-ideal of $S.$ Similarly we can prove the proposition by taking $\mu$ as $(\in, \in  \vee q)$-fuzzy left ideal of $S.$
\end{proof}

\begin{proposition}
Let $S$ be a regular left duo$($right duo, duo$)$ $\Gamma$-semigroup. Then every $(\in, \in\vee q)$-fuzzy bi-ideal of $S$ is an $(\in, \in \vee q)$-fuzzy right ideal$($resp. fuzzy left ideal, fuzzy ideal$)$ of $S.$
\end{proposition}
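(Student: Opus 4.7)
The plan is to show directly that for every $a,b\in S$ and every $\gamma\in\Gamma$, $\mu(a\gamma b)\geq\min\{\mu(a),0.5\}$, which is the defining condition for an $(\in,\in\vee q)$-fuzzy right ideal. The two ingredients are regularity of $S$ (so $a=a\alpha x\beta a$ for some $x\in S,\alpha,\beta\in\Gamma$) and the left duo hypothesis (so that the principal left ideal $L(a)=S\Gamma a\cup\{a\}$ is simultaneously a right ideal of $S$). Together these will let me express $a\gamma b$ in the bi-ideal sandwich form $a\alpha y\delta a$, after which the characterization of an $(\in,\in\vee q)$-fuzzy bi-ideal given in Theorem 3.3 closes the argument.

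First I would dispose of the case $\mu(a)=0$, where the desired inequality reads $\mu(a\gamma b)\geq 0$ and is automatic, so I may assume $a\in\mathrm{Supp}(\mu)$. Since $L(a)$ is two-sided by the left duo hypothesis, $a\gamma b\in L(a)\Gamma S\subseteq L(a)=S\Gamma a\cup\{a\}$. If $a\gamma b=a$ there is nothing to prove. Otherwise $a\gamma b=s\delta a$ for some $s\in S,\delta\in\Gamma$, and substituting the regularity decomposition of $a$ into the leftmost $a$ of $a\gamma b$ and using the associativity of the $\Gamma$-multiplication gives
\[
a\gamma b=(a\alpha x\beta a)\gamma b=a\alpha x\beta(a\gamma b)=a\alpha x\beta s\delta a=a\alpha(x\beta s)\delta a,
\]
which is of the bi-ideal form $a\alpha y\delta a$ with $y:=x\beta s\in S$. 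Applying Theorem 3.3 with both outer arguments equal to $a\in\mathrm{Supp}(\mu)$ then yields $\mu(a\gamma b)\geq\min\{\mu(a),\mu(a),0.5\}=\min\{\mu(a),0.5\}$, as required.

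For the right duo assertion the argument is symmetric: the principal right ideal $R(b)=b\Gamma S\cup\{b\}$ is also a left ideal, so $a\gamma b\in S\Gamma b\subseteq S\Gamma R(b)\subseteq R(b)$; in the nontrivial case $a\gamma b=b\delta' s'$, the regularity relation $b=b\alpha y\beta b$ gives $a\gamma b=a\gamma(b\alpha y\beta b)=(a\gamma b)\alpha y\beta b=b\delta'(s'\alpha y)\beta b$, once more in the sandwich form but with both outer arguments equal to $b$, so Theorem 3.3 produces $\mu(a\gamma b)\geq\min\{\mu(b),0.5\}$. The duo case is then a direct consequence of these two previous assertions.

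The only delicate point is the bookkeeping in the nontrivial case: one has to arrange the two substitutions (from left duo and from regularity) so that the resulting word lies exactly in the bi-ideal form $a\alpha y\delta a$, with a single middle element of $S$ and just two $\Gamma$-elements, rather than a longer $\Gamma$-word to which Theorem 3.3 does not directly apply. Everything else is a routine case split between the trivial case $a\gamma b=a$ (resp. $a\gamma b=b$) and the substantive case $a\gamma b\in S\Gamma a$ (resp. $a\gamma b\in b\Gamma S$).
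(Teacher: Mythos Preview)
Your argument is correct and follows essentially the same strategy as the paper: use regularity together with the duo hypothesis to rewrite $a\gamma b$ in the sandwich form $a\alpha y\delta a$, then invoke the bi-ideal inequality from Theorem~3.3. The paper's version is slightly more streamlined: it observes that regularity gives $x\gamma y\in (x\Gamma S\Gamma x)\Gamma S$, and since $S\Gamma x$ is a left ideal (hence two-sided by left duo), one has $(x\Gamma S\Gamma x)\Gamma S=x\Gamma(S\Gamma x)\Gamma S\subseteq x\Gamma(S\Gamma x)=x\Gamma S\Gamma x$ directly, so the sandwich form is obtained without any case split. In your setup the split ``$a\gamma b=a$'' versus ``$a\gamma b\in S\Gamma a$'' is in fact redundant, because regularity already forces $a=a\alpha x\beta a\in S\Gamma a$, so $L(a)=S\Gamma a$; but this is a cosmetic point and your proof stands as written.
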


\begin{proof}
Let $\mu$ be an $(\in, \in \vee q)$-fuzzy bi-ideal of $S$ and $x,y\in S,\gamma\in\Gamma.$ Then $x\gamma y\in S.$ Since $S$ is regular and left duo in view of the fact that $S\Gamma x$ is a left ideal we obtain, $x\gamma y\in(x\Gamma S\Gamma x)\Gamma S\subseteq x\Gamma S\Gamma x.$ This implies that there exist elements $z\in S,\alpha,\beta\in\Gamma,$ such that $x\gamma y=x\alpha z\beta x.$ Then $\mu(x\gamma y)=\mu(x\alpha z\beta
x)\geq\min\{\mu(x),\mu(x),0.5\}=\min\{\mu(x),0.5\}.$ Hence $\mu$ is an $(\in, \in \vee q)$-fuzzy right ideal of $S.$ Similarly we can prove the other cases also.
\end{proof}

In view of above two proposition we have the following theorem.

\begin{theorem}
In a regular left duo$($right duo,duo$)$ $\Gamma$-semigroup the following conditions are equivalent: $(1)$ $\mu$ is an $(\in, \in \vee q)$-fuzzy right ideal$($resp. fuzzy left ideal, fuzzy ideal$)$ of $S,$ $(2)$ $\mu$ is an $(\in,\in \vee q)$-fuzzy bi-ideal of $S.$
\end{theorem}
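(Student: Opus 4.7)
The plan is to observe that the theorem is an immediate consequence of Propositions 4.12 and 4.13 applied together, so no new machinery is required. For the regular left duo case, the implication $(1)\Rightarrow(2)$ is precisely Proposition 4.12, which asserts that every one-sided $(\in,\in\vee q)$-fuzzy ideal of any $\Gamma$-semigroup is an $(\in,\in\vee q)$-fuzzy bi-ideal; note that this direction does not even require regularity or the duo hypothesis. The converse implication $(2)\Rightarrow(1)$ is exactly the content of Proposition 4.13 in its left duo formulation, which produces for any $x,y\in S$ and $\gamma\in\Gamma$ a representation $x\gamma y = x\alpha z\beta x$ with $z\in S$ and $\alpha,\beta\in\Gamma$, and then applies the bi-ideal inequality to obtain $\mu(x\gamma y)\geq\min\{\mu(x),0.5\}$.

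For the right duo case I would read both propositions along their "resp." branches in exactly the same fashion: Proposition 4.12 gives that an $(\in,\in\vee q)$-fuzzy left ideal is an $(\in,\in\vee q)$-fuzzy bi-ideal, and Proposition 4.13 gives the reverse in a regular right duo $\Gamma$-semigroup. The duo case is handled by combining the left and right duo cases: if $S$ is both left and right duo, then a fuzzy bi-ideal is simultaneously a fuzzy right ideal and a fuzzy left ideal, hence a fuzzy ideal, while the converse again follows from Proposition 4.12.

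Because the author has stated the theorem as a formal corollary of the two preceding propositions, there is no genuine obstacle; the only care needed is in matching the three parallel formulations ("left duo/right ideal," "right duo/left ideal," "duo/ideal") with the corresponding clauses in Propositions 4.12 and 4.13. In a written-out proof I would simply say: the implication $(1)\Rightarrow(2)$ follows from Proposition 4.12 and the implication $(2)\Rightarrow(1)$ follows from Proposition 4.13, completing the equivalence in each of the three cases.
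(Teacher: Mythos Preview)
Your proposal is correct and matches the paper's approach exactly: the paper gives no separate proof but simply remarks ``In view of above two proposition we have the following theorem,'' referring to the two propositions immediately preceding the statement. Note, however, that in the paper's numbering these are Propositions~4.19 and~4.20 (not 4.12 and 4.13), since definitions, theorems, and propositions all share the same counter in Section~4.
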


We can prove the following theorem easily.

\begin{theorem}
Let $S$ be a $\Gamma$-semigroup. Then a non-empty subset $A$ of $S$ is a subsemigroup$($bi-ideal$)$ of $S$ if and only if the characteristic function $\mu_{A}$ of $A$ is an $(\in,\in \vee q)$-fuzzy subsemigroup$($resp. fuzzy bi-ideal$)$ of $S.$
\end{theorem}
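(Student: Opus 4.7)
The plan is to reduce the statement to the pointwise characterizations already proved, namely Theorem~3.2(2) for the subsemigroup part and Theorem~3.3(2) for the bi-ideal part. The key observation is that $\mu_A$ is $\{0,1\}$-valued, so $Supp(\mu_A) = A$ and, for any element $w \in S$, $\mu_A(w) \geq 0.5$ is equivalent to $\mu_A(w) = 1$, which is equivalent to $w \in A$. Thus each of the characterizing inequalities collapses to a pure set-theoretic closure condition on $A$.

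For the subsemigroup case, I would first assume $A$ is a subsemigroup and verify condition (2) of Theorem~3.2. Given $x,y \in Supp(\mu_A) = A$ and $\gamma \in \Gamma$, closure gives $x\gamma y \in A$, so $\mu_A(x\gamma y) = 1 \geq 0.5 = \min\{\mu_A(x), \mu_A(y), 0.5\}$. Hence $\mu_A$ is an $(\in,\in\vee q)$-fuzzy subsemigroup. Conversely, suppose $\mu_A$ is an $(\in,\in\vee q)$-fuzzy subsemigroup. Take arbitrary $x, y \in A$ and $\gamma \in \Gamma$; then $x, y \in Supp(\mu_A)$, and Theorem~3.2(2) gives
\[
\mu_A(x\gamma y) \;\geq\; \min\{\mu_A(x), \mu_A(y), 0.5\} \;=\; 0.5 \;>\; 0.
\]
Since $\mu_A$ takes only the values $0$ and $1$, this forces $\mu_A(x\gamma y) = 1$, i.e. $x\gamma y \in A$. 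So $A\Gamma A \subseteq A$ and $A$ is a subsemigroup.

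The bi-ideal case runs identically, now invoking Theorem~3.3. If $A$ is a bi-ideal, then for $x, z \in A$, $y \in S$, $\alpha, \beta \in \Gamma$ the containment $x\alpha y\beta z \in A\Gamma S \Gamma A \subseteq A$ gives $\mu_A(x\alpha y\beta z) = 1 \geq \min\{\mu_A(x), \mu_A(z), 0.5\}$, while $A\Gamma A \subseteq A$ handles the subsemigroup prerequisite as above; Theorem~3.3(2) then yields the bi-ideal property of $\mu_A$. For the converse, assuming $\mu_A$ is an $(\in,\in\vee q)$-fuzzy bi-ideal, the subsemigroup part of the theorem already shown gives $A\Gamma A \subseteq A$, and for any $x, z \in A$, $y \in S$, $\alpha, \beta \in \Gamma$ we obtain from Theorem~3.3(2) that $\mu_A(x\alpha y\beta z) \geq 0.5 > 0$, hence $x\alpha y\beta z \in A$; thus $A\Gamma S \Gamma A \subseteq A$.

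There is really no substantive obstacle here; the bookkeeping amounts to noticing that $\mu_A$ realizes the thresholds $0$ and $1$ only, so the $0.5$ appearing in the characterizing inequalities is automatically the binding minimum on the right-hand side, and the inequality on the left-hand side can only be satisfied by the value $1$. This is precisely why the authors remark that the theorem can be proved easily, and the plan above is the direct implementation of that remark.
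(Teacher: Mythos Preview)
Your proof is correct. The paper itself omits the proof entirely, stating only that ``we can prove the following theorem easily''; your reduction to the pointwise characterizations in Theorem~3.2(2) and Theorem~3.3(2), together with the observation that $\mu_A$ is $\{0,1\}$-valued so that the threshold $0.5$ forces membership, is exactly the kind of routine verification the authors had in mind.
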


\begin{theorem}
Let $S$ be a $\Gamma$-semigroup and $\mu$ be a non-empty fuzzy subset of $S.$ Then $\mu$ is an $(\in, \in \vee q)$-fuzzy subsemigroup of $S$ if and only if $[\mu]_{t}$ is a subsemigroup of $S.$
\end{theorem}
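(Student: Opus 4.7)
The plan is to leverage Theorem 3.2, which gives the useful equivalent description that any $(\in,\in\vee q)$-fuzzy subsemigroup $\mu$ satisfies $\mu(x\gamma y)\geq\min\{\mu(x),\mu(y),0.5\}$ for all $x,y\in Supp(\mu)$ and $\gamma\in\Gamma$. This characterization will be the workhorse for both directions.

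For the forward direction, I assume $\mu$ is an $(\in,\in\vee q)$-fuzzy subsemigroup. Fix $t\in(0,1]$ with $[\mu]_t$ non-empty, take $x,y\in[\mu]_t$ and $\gamma\in\Gamma$; by definition, $\mu(x)\geq t$ or $\mu(x)+t>1$, and similarly for $y$. In every case both $\mu(x)>0$ and $\mu(y)>0$, so $x,y\in Supp(\mu)$, and Theorem 3.2 gives $\mu(x\gamma y)\geq\min\{\mu(x),\mu(y),0.5\}$. I then split on $t\leq 0.5$ versus $t>0.5$. When $t\leq 0.5$, in each of the four sub-cases one checks that $\min\{\mu(x),\mu(y),0.5\}\geq t$ (noting that $\mu(x)+t>1$ already forces $\mu(x)>1-t\geq 0.5\geq t$), so $\mu(x\gamma y)\geq t$. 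When $t>0.5$, the bound $\mu(x\gamma y)\geq\min\{\mu(x),\mu(y),0.5\}$ is at least $\min\{1-t,0.5\}=1-t$ in the worst case (using $\mu(x)>1-t$ whenever the belong-to relation fails), which yields $\mu(x\gamma y)+t>1$. Either way $x\gamma y\in[\mu]_t$.

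For the converse, I argue by contraposition. Suppose $\mu$ is not an $(\in,\in\vee q)$-fuzzy subsemigroup; by Theorem 3.2 there exist $x,y\in Supp(\mu)$ and $\gamma\in\Gamma$ with $\mu(x\gamma y)<\min\{\mu(x),\mu(y),0.5\}$. Set $t:=\min\{\mu(x),\mu(y),0.5\}\in(0,0.5]$. Then $\mu(x),\mu(y)\geq t$, so $x,y\in U(\mu;t)\subseteq[\mu]_t$, while $\mu(x\gamma y)<t\leq 0.5$ gives $\mu(x\gamma y)+t<2t\leq 1$, so $x\gamma y\notin[\mu]_t$. Hence $[\mu]_t$ fails to be a subsemigroup, contradicting the hypothesis.

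I expect the main obstacle to be the bookkeeping in the forward direction: the definition of $[\mu]_t$ as the union $U(\mu;t)\cup Q(\mu;t)$ creates four sub-cases, and within each the conclusion $x\gamma y\in[\mu]_t$ can hold for a different reason (membership vs.\ quasi-coincidence) depending on whether $t\leq 0.5$ or $t>0.5$. Organising the case split cleanly, rather than computing, is where care is needed; no new ideas beyond Theorem 3.2 and routine inequality manipulations are required.
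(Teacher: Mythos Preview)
Your proposal is correct and follows essentially the same route as the paper: both directions rely on the characterization from Theorem~3.2, the forward direction is handled by the four-way case split on $(x_t\in\mu$ vs.\ $x_tq\mu)\times(y_t\in\mu$ vs.\ $y_tq\mu)$ combined with the dichotomy $t\le 0.5$ versus $t>0.5$, and the converse is the same contrapositive argument choosing a level $t\in(0,0.5]$ that separates $\mu(x\gamma y)$ from $\min\{\mu(x),\mu(y),0.5\}$. The only cosmetic difference is that you organize the forward direction by the value of $t$ first whereas the paper organizes by the four cases first; one small wording point is that in the $t>0.5$ branch you should say the bound is \emph{strictly greater than} $1-t$ (not ``at least''), since strictness is needed for $(x\gamma y)_t\,q\,\mu$.
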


\begin{proof}
Let $\mu$ be an $(\in, \in \vee q)$-fuzzy subsemigroup of $S.$ Let $x,y\in[\mu]_{t}$,$\gamma\in\Gamma$ and $t\in(0,1].$ Then $x_{t},y_{t}\in\vee q\mu,$ which implies, $\mu(x)\geq t$ or $\mu(x)+t>1$ and $\mu(y)\geq t$ or $\mu(y)+t>1.$ Since $\mu$ is an $(\in, \in \vee q)$-fuzzy subsemigroup of $S,$ we have $\mu(x\gamma y)\geq\min\{\mu(x),\mu(y),0.5\}.$\\

Case-$(1)$ Let $\mu(x)\geq t$ and $\mu(y)\geq t.$ Then $\mu(x\gamma y)\geq\min\{t,0.5\}.$ If $t>0.5,$ then $\mu(x\gamma y)\geq 0.5$ and consequently, $(x\gamma y)_{t}q\mu.$ If $t\leq 0.5,$ then $\mu(x\gamma y)\geq t$ and so $(x\gamma y)_{t}\in\mu.$ Hence $(x\gamma y)_{t}\in\vee q\mu,$ implies that $x\gamma y\in[\mu]_{t}.$\\

Case-$(2)$ Let $\mu(x)\geq t$ and $\mu(y)+t> 1.$ Then $\mu(x\gamma y)\geq\min\{t,1-t,0.5\}.$ If $t>0.5,$ then
$\mu(x\gamma y)> 1-t,i.e.,\mu(x\gamma y)+t>1$ and consequently, $(x\gamma y)_{t}q\mu.$ If $t\leq0.5,$ then $\mu(x\gamma y)\geq t$ and so $(x\gamma y)_{t}\in\mu.$ Hence $(x\gamma y)_{t}\in\vee q\mu,$ implies that $x\gamma y\in[\mu]_{t}.$\\

Case-$(3)$ Let $\mu(x)+t> 1$ and $\mu(y)\geq t.$ Then $\mu(x\gamma y)\geq\min\{1-t,t,0.5\}.$ If $t>0.5,$ then
$\mu(x\gamma y)> 1-t,i.e.,\mu(x\gamma y)+t>1$ and consequently, $(x\gamma y)_{t}q\mu.$ If $t\leq0.5,$ then $\mu(x\gamma y)\geq t$ and so $(x\gamma y)_{t}\in\mu.$ Hence $(x\gamma y)_{t}\in\vee q\mu,$ implies that $x\gamma y\in[\mu]_{t}.$\\

Case-$(4)$ Let $\mu(x)+t> 1$ and $\mu(y)+t> 1.$ Then $\mu(x\gamma y)\geq\min\{1-t,1-t,0.5\}.$ If $t>0.5,$ then $\mu(x\gamma y)>1-t,i.e.,\mu(x\gamma y)+t>1$ and consequently, $(x\gamma y)_{t}q\mu.$ If $t\leq0.5,$ then $\mu(x\gamma y)\geq 0.5\geq t$ and so $(x\gamma y)_{t}\in\mu.$ Hence $(x\gamma y)_{t}\in\vee q\mu,$ implies that $x\gamma y\in[\mu]_{t}.$ Thus in any case, we have $x\gamma y\in[\mu]_{t}.$ Hence $[\mu]_{t}$ is a subsemigroup
of $S.$\\

Conversely, Let $[\mu]_{t}$ is a subsemigroup of $S$ and let $x,y\in Supp(\mu),\gamma\in\Gamma$ be such that $\mu(x\gamma y)<t<\min\{\mu(x),\mu(y),0.5\}$ for some $t\in (0,0.5].$ Then $x,y\in U(\mu;t)\subseteq [\mu]_{t},$ which implies $x\gamma y\in[\mu]_{t}($since $[\mu]_{t}$ is a subsemigroup of $S$). Consequently, $\mu(x\gamma y)\geq t$ or $\mu(x\gamma y)+t>1,$ a contradiction. Thus $\mu(x\gamma y)\geq\min\{\mu(x),\mu(y),0.5\}\forall x,y\in S,\forall\gamma\in\Gamma.$ Hence $\mu$ is an $(\in,\in \vee q)$-fuzzy subsemigroup of $S.$
\end{proof}

In a similar way we can have the following theorem.

\begin{theorem}
Let $S$ be a $\Gamma$-semigroup and $\mu$ be a non-empty fuzzy subset of $S.$ Then $\mu$ is an $(\in, \in \vee q)$-fuzzy bi-ideal of $S$ if and only if $[\mu]_{t}$ is a bi-ideal of $S.$
\end{theorem}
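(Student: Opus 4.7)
The plan is to mirror the immediately preceding theorem on $(\in,\in\vee q)$-fuzzy subsemigroups, replacing its subsemigroup inequality with the bi-ideal inequality $\mu(x\alpha y\beta z)\geq\min\{\mu(x),\mu(z),0.5\}$ shown (for $x,z\in Supp(\mu)$, $y\in S$, $\alpha,\beta\in\Gamma$) to characterize $(\in,\in\vee q)$-fuzzy bi-ideals in the earlier five-way equivalence for bi-ideals. The intermediate element $y$ does not appear on the right-hand side, so essentially the same case analysis carries through with only cosmetic changes.

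For the forward direction, assume $\mu$ is an $(\in,\in\vee q)$-fuzzy bi-ideal. Since every such $\mu$ is in particular an $(\in,\in\vee q)$-fuzzy subsemigroup, the preceding theorem already gives that $[\mu]_t$ is a subsemigroup of $S$, so I only need to verify the bi-ideal closure $x\alpha y\beta z\in[\mu]_t$ whenever $x,z\in[\mu]_t$, $y\in S$, $\alpha,\beta\in\Gamma$. I split into four cases according to whether $x_t\in\mu$ or $x_t q\mu$, and whether $z_t\in\mu$ or $z_t q\mu$. In each case the bi-ideal inequality gives a lower bound on $\mu(x\alpha y\beta z)$ of the form $\min\{u,v,0.5\}$ with $u,v\in\{t,1-t\}$. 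A subsequent split on whether $t>0.5$ or $t\leq 0.5$ then yields either $\mu(x\alpha y\beta z)+t>1$ (quasi-coincidence) or $\mu(x\alpha y\beta z)\geq t$ (belonging), so in every case $(x\alpha y\beta z)_t\in\vee q\mu$, i.e.\ $x\alpha y\beta z\in[\mu]_t$.

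For the converse, suppose $[\mu]_t$ is a bi-ideal whenever non-empty. Since every bi-ideal is a subsemigroup, the preceding theorem immediately yields that $\mu$ is already an $(\in,\in\vee q)$-fuzzy subsemigroup. To finish, I invoke the bi-ideal characterization recalled above and must show $\mu(x\alpha y\beta z)\geq\min\{\mu(x),\mu(z),0.5\}$ for all $x,z\in Supp(\mu)$, $y\in S$, $\alpha,\beta\in\Gamma$. If this fails, pick $t\in(0,0.5]$ with $\mu(x\alpha y\beta z)<t\leq\min\{\mu(x),\mu(z),0.5\}$. Then $x,z\in U(\mu;t)\subseteq[\mu]_t$, so by hypothesis $x\alpha y\beta z\in[\mu]_t$, forcing $\mu(x\alpha y\beta z)\geq t$ or $\mu(x\alpha y\beta z)+t>1$; since $t\leq 0.5$, both contradict $\mu(x\alpha y\beta z)<t$.

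The only real obstacle is bookkeeping: tracking the four $\in$-vs-$q$ combinations for $x$ and $z$ cleanly in the forward direction, together with the inner split on $t>0.5$ versus $t\leq 0.5$. No new idea beyond the preceding theorem is required, because the middle symbol $y\in S$ plays no role on the right-hand side of the bi-ideal inequality and therefore the estimates reduce to exactly the ones used for the subsemigroup version.
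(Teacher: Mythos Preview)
Your proposal is correct and follows exactly the approach the paper intends: the paper gives no explicit proof for this theorem, merely stating ``In a similar way we can have the following theorem,'' and your argument is precisely the bi-ideal analogue of the preceding subsemigroup proof (Theorem~4.23), with the same four-case analysis in the forward direction and the same contradiction argument for the converse. Your additional observation that the subsemigroup closure of $[\mu]_t$ (and, conversely, the $(\in,\in\vee q)$-fuzzy subsemigroup property of $\mu$) comes for free from the preceding theorem is a clean way to handle the fact that a bi-ideal must in particular be a subsemigroup.
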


\begin{theorem}
A fuzzy subset $\mu$ of a $\Gamma$-semigroup $S$ is an $(\in, \in\vee q)$-fuzzy subsemigroup of $S$ if and only if $\mu\circ_{0.5}\mu\subseteq\mu.$
\end{theorem}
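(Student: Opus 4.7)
My plan is to reduce both implications to the pointwise condition (2) of Theorem~3.2, namely that $\mu(x\gamma y)\geq\min\{\mu(x),\mu(y),0.5\}$ for all $x,y\in\mathrm{Supp}(\mu),\gamma\in\Gamma$, which is equivalent to $\mu$ being an $(\in,\in\vee q)$-fuzzy subsemigroup. Once this is set up, the equivalence with $\mu\circ_{0.5}\mu\subseteq\mu$ essentially unfolds from the defining formula of the $0.5$-product.

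For the forward direction, I would fix $a\in S$ and show $(\mu\circ_{0.5}\mu)(a)\leq\mu(a)$. If $A_a=\emptyset$, then $(\mu\circ_{0.5}\mu)(a)=0\leq\mu(a)$ trivially. Otherwise, for each $(y,z)\in A_a$ with $a=y\gamma z$, consider the quantity $\min\{\mu(y),\mu(z),0.5\}$. If either $\mu(y)=0$ or $\mu(z)=0$, this minimum is $0\leq\mu(a)$. If both $y,z\in\mathrm{Supp}(\mu)$, then Theorem~3.2(2) applies and yields $\mu(a)=\mu(y\gamma z)\geq\min\{\mu(y),\mu(z),0.5\}$. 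Taking the supremum over $(y,z)\in A_a$ gives $(\mu\circ_{0.5}\mu)(a)\leq\mu(a)$.

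For the converse, assume $\mu\circ_{0.5}\mu\subseteq\mu$, and let $x,y\in\mathrm{Supp}(\mu),\gamma\in\Gamma$. Setting $a=x\gamma y$, the pair $(x,y)$ lies in $A_a$, so
\[
\mu(a)\ \geq\ (\mu\circ_{0.5}\mu)(a)\ =\ \sup_{(u,v)\in A_a}\min\{\mu(u),\mu(v),0.5\}\ \geq\ \min\{\mu(x),\mu(y),0.5\}.
\]
This is exactly condition (2) of Theorem~3.2, hence $\mu$ is an $(\in,\in\vee q)$-fuzzy subsemigroup.

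I do not anticipate any serious obstacle. The only mildly subtle point is the case split in the forward direction according to whether the factors $y,z$ in a representation $a=y\gamma z$ lie in $\mathrm{Supp}(\mu)$; the case where one of them is outside is needed because Theorem~3.2(2) is only stated for elements of $\mathrm{Supp}(\mu)$, but it is handled immediately by noting the minimum collapses to $0$.
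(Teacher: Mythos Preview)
Your proof is correct and follows essentially the same route as the paper's own argument: both directions are reduced to the pointwise inequality $\mu(y\gamma z)\geq\min\{\mu(y),\mu(z),0.5\}$ and the defining supremum of the $0.5$-product. The only difference is cosmetic: you insert an explicit case split on whether $y,z\in\mathrm{Supp}(\mu)$ because Theorem~3.2(2) is stated only on the support, whereas the paper simply applies the inequality globally (which is harmless, since the minimum is $0$ whenever one factor lies outside the support).
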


\begin{proof}
Let $\mu$ be an $(\in, \in \vee q)$-fuzzy subsemigroup of $S.$ Let $a\in S$. If $A_{a}=\phi,$ then $(\mu\circ_{0.5}\mu)(a)=0\leq\mu(a).$ If $A_{a}\neq\phi$, then
\begin{align*}
(\mu\circ_{0.5}\mu)(a)&=\underset{(y,z)\in A_{a}}{\sup}[\min\{\mu(y),\mu(z),0.5\}]\\
&\leq\underset{(y,z)\in A_{a}}{\sup}\mu(y\gamma z)(\text{where } a=y\gamma z)\\
&=\underset{(y,z)\in A_{a}}{\sup}\mu(a)=\mu(a).
\end{align*}
Hence $\mu\circ_{0.5}\mu\subseteq\mu.$

Conversely, let $\mu\circ_{0.5}\mu\subseteq\mu.$ Then for $y,z\in S,\gamma\in\Gamma$,
\begin{align*}
\mu(y\gamma z)&\geq(\mu\circ_{0.5}\mu)(y\gamma z)\\
&=\underset{(b,c)\in A_{y\gamma z}}{\sup}\min\{\mu(y),\mu(z),0.5\}\\
&\geq\min\{\mu(y),\mu(z),0.5\}.
\end{align*}
Hence $\mu$ is an $(\in, \in \vee q)$-fuzzy subsemigroup of $S.$
\end{proof}

\begin{theorem}
In a $\Gamma$-semigroup $S$ the following are equivalent: $(1)$ $\mu$ is an $(\in, \in \vee q)$-fuzzy bi-ideal of $S,$ $(2)$ $\mu\circ_{0.5}\mu\subseteq\mu$ and $\mu\circ_{0.5}1\circ_{0.5}\mu\subseteq\mu.$
\end{theorem}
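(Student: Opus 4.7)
The plan is to prove both directions by leveraging Theorem 3.27 for the subsemigroup half and reducing everything else to the characterization (2) of Theorem 3.3, which says that an $(\in,\in\vee q)$-fuzzy subsemigroup $\mu$ is an $(\in,\in\vee q)$-fuzzy bi-ideal iff $\mu(x\alpha y\beta z)\geq\min\{\mu(x),\mu(z),0.5\}$ for all $x,z\in\mathrm{Supp}(\mu)$, $y\in S$, $\alpha,\beta\in\Gamma$.

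For $(1)\Rightarrow(2)$: Since $\mu$ is an $(\in,\in\vee q)$-fuzzy bi-ideal, it is in particular an $(\in,\in\vee q)$-fuzzy subsemigroup, so Theorem 3.27 immediately yields $\mu\circ_{0.5}\mu\subseteq\mu$. For $\mu\circ_{0.5}1\circ_{0.5}\mu\subseteq\mu$, fix $a\in S$; if $A_a=\phi$ the value is $0\le\mu(a)$, so assume $A_a\ne\phi$. Expanding the triple product as in the computation inside Proposition 3.22,
\begin{align*}
(\mu\circ_{0.5}1\circ_{0.5}\mu)(a)=\sup_{(u,v)\in A_a}\sup_{(p,q)\in A_u}\min\{\mu(p),\mu(v),0.5\},
\end{align*}
where each triple $(p,q,v)$ satisfies $a=p\alpha q\beta v$ for suitable $\alpha,\beta\in\Gamma$. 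Applying the bi-ideal inequality $\mu(p\alpha q\beta v)\geq\min\{\mu(p),\mu(v),0.5\}$ (which holds trivially when $\mu(p)=0$ or $\mu(v)=0$) inside the supremum gives $(\mu\circ_{0.5}1\circ_{0.5}\mu)(a)\leq\mu(a)$.

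For $(2)\Rightarrow(1)$: The assumption $\mu\circ_{0.5}\mu\subseteq\mu$ together with Theorem 3.27 shows that $\mu$ is an $(\in,\in\vee q)$-fuzzy subsemigroup of $S$. It remains to verify condition (2) of Theorem 3.3. Take any $x,z\in\mathrm{Supp}(\mu)$, $y\in S$, $\alpha,\beta\in\Gamma$, and set $a=x\alpha y\beta z=(x\alpha y)\beta z$, so $(x\alpha y,z)\in A_a$. From $\mu\circ_{0.5}1\circ_{0.5}\mu\subseteq\mu$ we obtain
\begin{align*}
\mu(a)&\geq(\mu\circ_{0.5}1\circ_{0.5}\mu)(a)\\
&\geq\min\{(\mu\circ_{0.5}1)(x\alpha y),\mu(z),0.5\}\\
&\geq\min\{\min\{\mu(x),1(y),0.5\},\mu(z),0.5\}\\
&=\min\{\mu(x),\mu(z),0.5\},
\end{align*}
using $(x,y)\in A_{x\alpha y}$ to drop the inner supremum. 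This is precisely Theorem 3.3(2), so $\mu$ is an $(\in,\in\vee q)$-fuzzy bi-ideal.

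The only mildly delicate point is the bookkeeping of the nested suprema in the $(1)\Rightarrow(2)$ direction, to ensure that whenever a decomposition $a=p\alpha q\beta v$ contributes to the supremum it indeed falls under the bi-ideal inequality; this is handled by the same associative rewriting already used in Proposition 3.22 and is the main (but very mild) obstacle. Everything else is a direct invocation of Theorems 3.3 and 3.27.
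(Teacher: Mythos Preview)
Your argument is correct and follows essentially the same route as the paper's own proof: both directions lean on the subsemigroup characterization $\mu\circ_{0.5}\mu\subseteq\mu$ (the paper's Theorem~4.25), and the bi-ideal half is handled by expanding the triple $0.5$-product and applying the pointwise inequality $\mu(p\alpha q\beta v)\geq\min\{\mu(p),\mu(v),0.5\}$, exactly as the paper does. The only issue is your cross-references: what you call Theorem~3.27 and Proposition~3.22 are Theorem~4.25 and Proposition~4.16 in the paper's numbering (Theorem~3.3 is correct).
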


\begin{proof}
Let us assume that $(1)$ hold. Since $\mu$ is an $(\in, \in \vee q)$-fuzzy bi-ideal of $S,$ then $\mu$ is an $(\in,\in \vee q)$-fuzzy subsemigroup of $S.$ So by, Theorem $4.25,$ $\mu\circ_{0.5}\mu\subseteq\mu.$

Let $a\in S.$ If $A_{a}=\phi.$ Then $(\mu\circ_{0.5}1\circ_{0.5}\mu)(a)=0\leq\mu(a).$

If $A_{a}\neq\phi,$ then there exist $x,y,p,q\in S$ and $\beta,\gamma\in\Gamma$ such that $a=x\gamma y$ and $x=p\beta q.$ Since $\mu$ is an $(\in,\in\vee q)$-fuzzy bi-ideal of $S,$ we have $\mu(a)=\mu(x\gamma y)=\mu(p\beta q\gamma y)\geq\min\{\mu(p),\mu(y),0.5\}.$ Then
\begin{align*}
(\mu\circ_{0.5}1\circ_{0.5}\mu)(a)&=\underset{(x,y)\in A_{a}}{\sup}[\min\{(\mu\circ_{0.5}1)(x),\mu(y)\}]\\
&=\underset{(x,y)\in A_{a}}{\sup}[\min\{\underset{(p,q)\in A_{x}}{\sup}\{\min\{\mu(p),1(q),0.5\}\},\mu(y)\}]\\
&=\underset{(x,y)\in A_{a}}{\sup}[\min\{\underset{(p,q)\in
A_{x}}{\sup}\{\min\{\mu(p),1,0.5\}\},\mu(y)\}]\\
&=\underset{(x,y)\in A_{a}}{\sup}\underset{(p,q)\in A_{x}}{\sup}\min\{\mu(p),\mu(y),0.5\}\\
&\leq\underset{(x,y)\in A_{a}}{\sup}\{\mu(p\beta q\gamma y)\}\\
&=\underset{(x,y)\in A_{a}}{\sup}\mu(x\gamma y)=\mu(a).
\end{align*}
Hence $\mu\circ_{0.5}1\circ_{0.5}\mu\subseteq\mu.$

Conversely, let us suppose that $(2)$ holds. Since $\mu\circ_{0.5}\mu\subseteq\mu,$ so $\mu$ is an $(\in,\in \vee q)$-fuzzy subsemigroup of $S.$ Let $x,y,z\in S,\beta,\gamma\in\Gamma$ and $a=x\beta y\gamma z=p\gamma z($where
$p=x\beta y).$ Since $\mu\circ_{0.5}1\circ_{0.5}\mu\subseteq\mu,$ we have
\begin{align*}
\mu(x\beta y\gamma z)&=\mu(a)\geq(\mu\circ_{0.5}1\circ_{0.5}\mu)(a)\\
&=\underset{(p,z)\in A_{a}}{\sup}[\min\{(\mu\circ_{0.5}1)(p),\mu(z),0.5\}]\\
&\geq\min\{(\mu\circ_{0.5}1)(p),\mu(z),0.5\}\\
&=\min[\underset{(x,y)\in A_{a}}{\sup}\{\min\{\mu(x),1(y),0.5\},\mu(z),0.5]\\
&=\min[\underset{(x,y)\in A_{a}}{\sup}\{\min\{\mu(x),1,0.5\},\mu(z),0.5]\\
&\geq\min[\min\{\mu(x),0.5\},\mu(z),0.5]\\
&=\min\{\mu(x),\mu(z),0.5\}.
\end{align*}
Hence $\mu$ is an $(\in, \in \vee q)$-fuzzy bi-ideal of $S.$
\end{proof}

In the following example we show that in Theorem $4.26$ equality does not hold generally in condition $(2).$

\begin{example}
Let $S=\{a,b,c\}$ and $\Gamma=\{\gamma\}$, where $\gamma$ is defined on S with the following cayley table:
\begin{equation*}
\begin{array}{l|lll}
\gamma & a & b &c\\ \hline
a & a & a & a \\
b & b & b & b \\
c & c & c & c \\
\end{array}
\end{equation*}

Then $S$ is a $\Gamma $-semigroup. We define fuzzy subset $\mu:S\rightarrow [0,1],$ by $\mu(a)=0.8,\mu(b)=0.7,\mu(c)=0.6.$ Clearly $\mu$ is an $(\in,\in \vee q)$-fuzzy subsemigroup and $(\in, \in \vee q)$-fuzzy bi-ideal of $S.$

Now $(\mu\circ_{0.5}\mu)(a)$=$\underset{(x,y)\in A_{a}}{\sup}\min\{\mu(x),\mu(y),0.5\}=0.5 <0.8=\mu(a)$. Hence $\mu\neq\mu\circ_{0.5}\mu$. Also $(\mu\circ_{0.5}1\circ_{0.5}\mu)(a)=0.5<0.8=\mu(a)$. Hence $\mu\circ_{0.5}1\circ_{0.5}\mu\neq\mu$.
\end{example}

\begin{theorem}
A $\Gamma$-semigroup $S$ is regular if and only if for every $(\in,\in\vee q)$-fuzzy bi-ideal $\mu$ of $S,$ $\mu\circ_{0.5}1\circ_{0.5}\mu=\mu\cap 0.5_{S}$.
\end{theorem}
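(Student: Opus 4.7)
The plan is to handle the two implications separately, using the product characterization of $(\in,\in\vee q)$-fuzzy bi-ideals given by the preceding theorem and the earlier theorem that a subset is a bi-ideal if and only if its characteristic function is an $(\in,\in\vee q)$-fuzzy bi-ideal.

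For the direction $(\Rightarrow)$, assume $S$ is regular and let $\mu$ be any $(\in,\in\vee q)$-fuzzy bi-ideal of $S$. The preceding theorem gives $\mu\circ_{0.5}1\circ_{0.5}\mu\subseteq\mu$, and since every value of $\circ_{0.5}$ is bounded above by $0.5$ by definition, we also have $\mu\circ_{0.5}1\circ_{0.5}\mu\subseteq 0.5_{S}$; hence $\mu\circ_{0.5}1\circ_{0.5}\mu\subseteq\mu\cap 0.5_{S}$. For the reverse inclusion, fix $a\in S$ and use regularity to write $a=a\alpha x\beta a$ for some $x\in S,\alpha,\beta\in\Gamma$. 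Then $(a\alpha x,a)\in A_{a}$ and $(a,x)\in A_{a\alpha x}$, so unwinding $\circ_{0.5}$ twice yields
\[
(\mu\circ_{0.5}1\circ_{0.5}\mu)(a)\geq\min\{\min\{\mu(a),1(x),0.5\},\mu(a),0.5\}=\min\{\mu(a),0.5\}=(\mu\cap 0.5_{S})(a).
\]

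For the direction $(\Leftarrow)$, fix $a\in S$ and consider the principal bi-ideal $B(a):=\{a\}\cup a\Gamma a\cup a\Gamma S\Gamma a$, which a short associativity check shows is indeed a bi-ideal of $S$. By the characterization of bi-ideals through their characteristic functions, $\mu_{B(a)}$ is an $(\in,\in\vee q)$-fuzzy bi-ideal of $S$, so the hypothesis gives
\[
(\mu_{B(a)}\circ_{0.5}1\circ_{0.5}\mu_{B(a)})(a)=(\mu_{B(a)}\cap 0.5_{S})(a)=0.5.
\]
Since this value is strictly positive, unwinding $\circ_{0.5}$ twice produces elements $u,v,q\in S$ and $\delta,\gamma\in\Gamma$ such that $a=u\delta v\gamma q$, $u\in B(a)$, and $q\in B(a)$.

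The heart of the argument is then a short case analysis. Every element of $B(a)$ is either $a$ itself or has the form $a\tau w$ with $\tau\in\Gamma$, $w\in S$; dually, it equals $a$ or has the form $w'\tau'a$. In each of the four combinations for $(u,q)$, repeated application of the $\Gamma$-semigroup associativity law regroups the string $u\delta v\gamma q$ into the shape $a\alpha z\beta a$ for a single $z\in S$ and suitable $\alpha,\beta\in\Gamma$, by absorbing the leading $a$ of $u$ and the trailing $a$ of $q$ into the outermost positions and folding everything in between into $z$. This exhibits $a\in a\Gamma S\Gamma a$, so $S$ is regular. The main obstacle is precisely this associativity bookkeeping (together with verifying that $B(a)$ is a bi-ideal); beyond that the proof reduces to direct use of the definition of $\circ_{0.5}$ and the preceding product characterization.
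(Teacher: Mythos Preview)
Your proof is correct. The forward direction is essentially identical to the paper's (you split $a=(a\alpha x)\beta a$ where the paper splits $a=a\alpha(x\beta a)$, which makes no difference). For the converse, however, you take a somewhat different route. The paper applies the hypothesis to the characteristic function of an \emph{arbitrary} bi-ideal $B$, uses Proposition~4.15 to rewrite the resulting equation as $\mu_{B\Gamma S\Gamma B}\cap 0.5_{S}=\mu_{B}\cap 0.5_{S}$, reads off $B=B\Gamma S\Gamma B$, and then invokes (without proof) the classical fact that this identity for all bi-ideals forces regularity. You instead work with the concrete principal bi-ideal $B(a)=\{a\}\cup a\Gamma a\cup a\Gamma S\Gamma a$, unwind the $0.5$-product directly to obtain $a=u\delta v\gamma q$ with $u,q\in B(a)$, and finish by an explicit associativity case analysis yielding $a\in a\Gamma S\Gamma a$. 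Your argument is a bit longer but fully self-contained, whereas the paper's is shorter but leans on an external regularity criterion; in effect your case analysis is precisely what is needed to supply that criterion.
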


\begin{proof}
Let $S$ be a regular $\Gamma$-semigroup and $\mu$ be an $(\in,\in\vee q)$-fuzzy bi-ideal of $S.$ By Theorem $4.26,$ $\mu\circ_{0.5}1\circ_{0.5}\mu\subseteq\mu$. Also for $a\in S$,
\begin{align*}
(\mu\circ_{0.5}1\circ_{0.5}\mu)(a)&=\underset{(u,v)\in A_{a}}{\sup}\min\{\mu(u),(1\circ_{0.5}\mu)(v),0.5\}\leq 0.5=(0.5_{S})(a).
\end{align*}
Then
$\mu\circ_{0.5}1\circ_{0.5}\mu\subseteq 0.5_{S}$. Hence $\mu\circ_{0.5}1\circ_{0.5}\mu\subseteq\mu\cap 0.5_{S}$. Since $S$ is regular for $a\in S$ there exists $x\in S,\alpha,\beta\in\Gamma$ such that $a=a\alpha x\beta a$. Then
\begin{align*}
(\mu\circ_{0.5}1\circ_{0.5}\mu)(a)&=\underset{(u,v)\in A_{a}}{\sup}\min\{\mu(u),(1\circ_{0.5}\mu)(v),0.5\}\\
&\geq\min\{\mu(a),(1\circ_{0.5}\mu)(x\beta a),0.5\}\\
&\geq\min\{\mu(a),\min\{1(x),\mu(a),0.5\},0.5\}\\
&=\min\{\mu(a),0.5\}=(\mu\cap 0.5_{S})(a).
\end{align*}
So $\mu\circ_{0.5}1\circ_{0.5}\mu\supseteq \mu\cap 0.5_{S}$. Hence $\mu\circ_{0.5}1\circ_{0.5}\mu=\mu\cap 0.5_{S}$.

Conversely, let for every $(\in,\in\vee q)$-fuzzy bi-ideal $\mu$ of $S,$ $\mu\circ_{0.5}1\circ_{0.5}\mu=\mu\cap 0.5_{S}$. Let $B$ be a bi-ideal of $S.$ Then $\mu_{B}$ is an $(\in,\in\vee q)$-fuzzy bi-ideal of $S.$ Then $\mu_{B}\circ_{0.5}1\circ_{0.5}\mu_{B}=\mu_{B}\cap 0.5_{S}$. Then $\mu\circ_{0.5}1\circ_{0.5}\mu=\mu\cap 0.5_{S}$. Then by Proposition $4.15,$ $\mu_{B\Gamma S\Gamma B}\cap 0.5_{S}=\mu_{B}\cap 0.5_{S}$. If $b\in B$, then $\mu(b)=1$. Then $(\mu_{B}\cap 0.5_{S})(b)=\min\{\mu_{B}(b),0.5\}=\min\{1,0.5\}=0.5.$ Then $(\mu_{B\Gamma S\Gamma B}\cap 0.5_{S})(b)=0.5 \Rightarrow\min\{\mu_{B\Gamma S\Gamma B}(b),0.5\}=0.5 \Rightarrow\mu_{B\Gamma S\Gamma B}(b)=1\Rightarrow b\in B\Gamma S\Gamma B$. Then $B\subseteq B\Gamma S\Gamma B$. Also $B\Gamma S\Gamma B\subseteq B$. So $B=B\Gamma S\Gamma B$. Hence $S$ is regular.
\end{proof}

\begin{theorem}
Let $S$ be a $\Gamma$-semigroup. Then the following are equivalent: $(1)$ $S$ is both regular and intra-regular, $(2)$ $\mu\circ_{0.5}\mu=\mu\cap 0.5_{S}$, for every $(\in,\in\vee q)$-fuzzy bi-ideal $\mu$ of $S,$ $(3)$ $\mu\cap_{0.5}\nu=(\mu\circ_{0.5}\nu)\cap_{0.5}(\nu\circ_{0.5}\mu)$ for all $(\in,\in\vee q)$-fuzzy bi-ideals $\mu$ and $\nu$ of $S.$
\end{theorem}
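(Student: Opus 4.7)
My plan is to establish the cycle $(1)\Rightarrow(3)\Rightarrow(2)\Rightarrow(1)$. The engine throughout is a substitution identity valid in any regular and intra-regular $\Gamma$-semigroup: given $a\in S$, applying regularity twice yields $a=a\alpha x\beta a=(a\alpha x\beta a)\alpha x\beta a$, and substituting the intra-regular form $a=y\gamma a\delta a\epsilon z$ into the middle $a$ and regrouping gives
\[
a=\bigl(a\alpha(x\beta y)\gamma a\bigr)\,\delta\,\bigl(a\epsilon(z\alpha x)\beta a\bigr)=u\,\delta\,v,
\]
where $u,v\in a\Gamma S\Gamma a$. By Theorem~3.4(2), any $(\in,\in\vee q)$-fuzzy bi-ideal $\mu$ then satisfies $\mu(u),\mu(v)\ge\min\{\mu(a),0.5\}$, and similarly for $\nu$.

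For $(1)\Rightarrow(3)$, the $\supseteq$ inclusion is immediate: $(\mu\circ_{0.5}\nu)(a)\ge\min\{\mu(u),\nu(v),0.5\}\ge\min\{\mu(a),\nu(a),0.5\}$, and symmetrically for $\nu\circ_{0.5}\mu$. For the $\subseteq$ inclusion, I fix arbitrary decompositions $a=u_{1}\gamma_{1}v_{1}$ and $a=u_{2}\gamma_{2}v_{2}$ (with $A_{a}\neq\phi$ guaranteed by regularity) and plug them into $a=a\alpha x\beta a$ on the left and right respectively to obtain
\[
a=u_{1}\gamma_{1}(v_{1}\alpha x\beta u_{2})\gamma_{2}v_{2},
\]
a bi-ideal pattern giving $\mu(a)\ge\min\{\mu(u_{1}),\mu(v_{2}),0.5\}$; swapping the roles produces $\nu(a)\ge\min\{\nu(u_{2}),\nu(v_{1}),0.5\}$. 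Hence
\[
\min\{\mu(u_{1}),\nu(v_{1}),\nu(u_{2}),\mu(v_{2}),0.5\}\le\min\{\mu(a),\nu(a),0.5\},
\]
and taking the supremum of the left side over the pair of decompositions (using the identity $\min\{\sup f,\sup g,0.5\}=\sup\min\{f,g,0.5\}$ for bounded nonnegative data) converts it into $((\mu\circ_{0.5}\nu)\cap_{0.5}(\nu\circ_{0.5}\mu))(a)$, as required.

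The implication $(3)\Rightarrow(2)$ is a direct specialization to $\nu=\mu$: the left side becomes $\mu\cap 0.5_{S}$, while the right collapses to $\mu\circ_{0.5}\mu$ because $\mu\circ_{0.5}\mu\subseteq 0.5_{S}$ by construction of the $0.5$-product.

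Finally, for $(2)\Rightarrow(1)$ I pass through characteristic functions. For any bi-ideal $B\subseteq S$, Theorem~4.22 makes $\mu_{B}$ an $(\in,\in\vee q)$-fuzzy bi-ideal, so (2) combined with Proposition~4.15(2) yields $\mu_{B\Gamma B}\cap 0.5_{S}=\mu_{B}\cap 0.5_{S}$; evaluating at $b\in B$ forces $b\in B\Gamma B$, and since $B$ is a subsemigroup the reverse inclusion is automatic, so $B=B\Gamma B$ for every bi-ideal. Applied to the principal bi-ideal $B(a)=\{a\}\cup a\Gamma a\cup a\Gamma S\Gamma a$, the relation $a\in B(a)\Gamma B(a)$ expresses $a=b_{1}\gamma b_{2}$ with both $b_{i}$ beginning and ending in $a$, and a short case analysis on the nine combinations exhibits the regularity pattern $a\Gamma(\cdot)\Gamma a$ directly; iterating $B(a)=B(a)\Gamma B(a)$ once more supplies enough copies of $a$ to extract a consecutive $a\Gamma a$ block, giving intra-regularity. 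The main obstacles I anticipate are the sup-min interchange at the end of $(1)\Rightarrow(3)$ and the bookkeeping in this final case analysis.
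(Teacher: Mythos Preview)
Your proof is correct, but it follows a different cycle than the paper's: you prove $(1)\Rightarrow(3)\Rightarrow(2)\Rightarrow(1)$, while the paper proves $(1)\Rightarrow(2)\Rightarrow(3)\Rightarrow(1)$. The substantive differences are in two places. First, for the inclusion $(\mu\circ_{0.5}\nu)\cap_{0.5}(\nu\circ_{0.5}\mu)\subseteq\mu\cap_{0.5}\nu$, you give a direct elementwise argument using only regularity: plugging two arbitrary factorizations of $a$ into $a=a\alpha x\beta a$ yields the bi-ideal pattern $u_{1}\gamma_{1}(\cdots)\gamma_{2}v_{2}$, and the independent sup--min interchange finishes it. The paper instead reaches $(3)$ from $(2)$ algebraically, invoking Propositions~4.16 and~4.17 to show that $(\mu\circ_{0.5}\nu)\cap_{0.5}(\nu\circ_{0.5}\mu)$ is itself an $(\in,\in\vee q)$-fuzzy bi-ideal and then applying $(2)$ and the chain $\mu\circ_{0.5}(\nu\circ_{0.5}\nu)\circ_{0.5}\mu\subseteq\mu\circ_{0.5}1\circ_{0.5}\mu\subseteq\mu$. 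Your route is more elementary and avoids those auxiliary propositions; the paper's route is more modular. Second, to recover the crisp structure you use $B=B\Gamma B$ for every bi-ideal and then do a case analysis on the principal bi-ideal $B(a)$, whereas the paper passes through the two-bi-ideal identity $P\cap Q=P\Gamma Q\cap Q\Gamma P$ via characteristic functions and Proposition~4.15, which sidesteps any case split. Your nine-case analysis does go through (every element of $B(a)$ both begins and ends in $a$, so the junction in $b_{1}\gamma b_{2}$ always contains a block $a\gamma a$; one substitution using the already-established regularity supplies the flanking $x,y$ when $b_{1}$ or $b_{2}$ equals $a$), but the paper's device is cleaner.
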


\begin{proof}
$(1)\Rightarrow(2):$ Let $S$ be both regular and intra-regular. Let $\mu$ be an $(\in,\in\vee q)$-fuzzy bi-ideal of $S.$ Then by Theorem $4.26,$ $\mu\circ_{0.5}\mu\subseteq\mu$. Also $\mu\circ_{0.5}\mu\subseteq 0.5_{S}$. Then $\mu\circ_{0.5}\mu\subseteq\mu\cap 0.5_{S}$. Let $a\in S$. Since $S$ is regular and intra-regular, there exists $x,y,z\in S$ and $\alpha,\beta,\gamma,\delta,\eta\in\Gamma$ such that $a=a\alpha x\beta a$ and $a=y\gamma a\delta a\eta z$. Hence
\begin{align*}
a&=a\alpha x\beta a=a\alpha x\beta a\alpha x\beta a=a\alpha x\beta(y\gamma a\delta a\eta z)\alpha x\beta a=(a\alpha x\beta y\gamma a)\delta(a\eta z\alpha x\beta a).
\end{align*}
Then
\begin{align*}
(\mu\circ_{0.5}\mu)(a)&=\underset{(u,v)\in A_{a}}{\sup}\min\{\mu(u),\mu(v),0.5\}\\
&\geq\min\{\mu(a\alpha x\beta y\gamma a),\mu(a\eta z\alpha x\beta a),0.5\}\\
&\geq\min\{\min\{\mu(a),\mu(a),0.5\},\min\{\mu(a),\mu(a),0.5\},0.5\}\\
&=\min\{\mu(a),0.5\}=(\mu\cap 0.5_{S})(a).
\end{align*}
Then $\mu\circ_{0.5}\mu\supseteq\mu\cap 0.5_{S}$. Hence $\mu\circ_{0.5}\mu=\mu\cap 0.5_{S}$.\\

$(2)\Rightarrow(3):$ Let $\mu$ and $\nu$ be two $(\in,\in\vee q)$-fuzzy bi-ideals of $S.$ Then by Proposition $4.17,$ $\mu\cap_{0.5}\nu$ is also an $(\in,\in\vee q)$-fuzzy bi-ideals of $S.$ Then $(\mu\cap_{0.5}\nu)\circ_{0.5}(\mu\cap_{0.5}\nu)=(\mu\cap_{0.5}\nu)\cap 0.5_{S}=\mu\cap_{0.5}\nu$. Then $\mu\cap_{0.5}\nu=(\mu\cap_{0.5}\nu)\circ_{0.5}(\mu\cap_{0.5}\nu)\subseteq\mu\circ_{0.5}\nu$. Similarly $\mu\cap_{0.5}\nu\subseteq\nu\circ_{0.5}\mu$. Hence $\mu\cap_{0.5}\nu\subseteq(\mu\circ_{0.5}\nu)\cap_{0.5}(\nu\circ_{0.5}\mu)$. Also by Proposition $4.16,$ $\mu\circ_{0.5}\nu$ and $\nu\circ_{0.5}\mu$ are $(\in,\in\vee q)$-fuzzy bi-ideals of $S.$ Then by Proposition $4.17,$ $(\mu\circ_{0.5}\nu)\cap_{0.5}(\nu\circ_{0.5}\mu)$ is an $(\in,\in\vee q)$-fuzzy bi-ideal of $S.$ Then
\begin{align*}
(\mu\circ_{0.5}\nu)\cap_{0.5}(\nu\circ_{0.5}\mu)&=((\mu\circ_{0.5}\nu)\cap_{0.5}(\nu\circ_{0.5}\mu))\cap 0.5_{S}\\
&=((\mu\circ_{0.5}\nu)\cap_{0.5}(\nu\circ_{0.5}\mu))\circ_{0.5}((\mu\circ_{0.5}\nu)\cap_{0.5}
(\nu\circ_{0.5}\mu))\\
&\subseteq(\mu\circ_{0.5}\nu)\circ_{0.5}(\nu\circ_{0.5}\mu)\\
&=\mu\circ_{0.5}(\nu\circ_{0.5}\nu)\circ_{0.5}\mu\\
&\subseteq\mu\circ_{0.5}1\circ_{0.5}\mu\\
&\subseteq\mu\cap 0.5_{S}.
\end{align*}
Similarly $(\mu\circ_{0.5}\nu)\cap_{0.5}(\nu\circ_{0.5}\mu)\subseteq\nu\cap 0.5_{S}$. Then $(\mu\circ_{0.5}\nu)\cap_{0.5}(\nu\circ_{0.5}\mu)$
$\subseteq(\mu\cap 0.5_{S})\cap(\nu\cap 0.5_{S})$=$\mu\cap_{0.5}\nu$. Hence $\mu\cap_{0.5}\nu=(\mu\circ_{0.5}\nu)\cap_{0.5}(\nu\circ_{0.5}\mu).$\\

$(3)\Rightarrow(1):$ Let $(3)$ holds. To prove that $S$ is regular and intra-regular we have to prove that $P\cap Q=P\Gamma Q\cap Q\Gamma P$ for every bi-ideal $P$ and $Q$ of $S.$ Let $b\in P\cap Q$. By Theorem $4.22,$ $\chi_{P}$ and $\chi_{Q}$ are $(\in,\in\vee q)$-fuzzy bi-ideals of $S.$ Now $\chi_{P}(b)=1$ and $\chi_{Q}(b)=1.$ By $(3),$ $\chi_{P}\cap_{0.5}\chi_{Q}=(\chi_{P}\circ_{0.5}\chi_{Q})\cap_{0.5}(\chi_{Q}\circ_{0.5}\chi_{P}).$ Now $(\chi_{P}\cap_{0.5}\chi_{Q})(b)=\min\{\chi_{P}(b),\chi_{Q}(b),0.5\}=0.5.$ Then $((\chi_{P}\circ_{0.5}\chi_{Q})\cap_{0.5}(\chi_{Q}\circ_{0.5}\chi_{P}))(b)=0.5.$ By Proposition $4.15,$ $(\chi_{P\Gamma Q\cap Q\Gamma P}\cap 0.5_{S})(b)=0.5.$ Then $\chi_{P\Gamma Q\cap Q\Gamma P}(b)=1 \Rightarrow b\in P\Gamma Q\cap Q\Gamma P.$ Then $P\cap Q\subseteq P\Gamma Q\cap Q\Gamma P.$ If $b\in P\Gamma Q\cap Q\Gamma P.$ Then by Proposition $4.15,$ $\chi_{P\Gamma Q\cap Q\Gamma P}(b)=1\Rightarrow(\chi_{P\Gamma Q\cap Q\Gamma P}\cap 0.5_{S})(b)=0.5\Rightarrow (\chi_{P\Gamma Q}\cap_{0.5}\chi_{Q\Gamma P})(b)=0.5\Rightarrow((\chi_{P}\circ_{0.5}\chi_{Q})\cap_{0.5}(\chi_{Q}\circ_{0.5}\chi_{P}))(b)=0.5\Rightarrow (\chi_{P}\cap_{0.5}\chi_{Q})(b)=0.5\Rightarrow(\chi_{P\cap Q}\cap 0.5_{S})(b)=0.5\Rightarrow\chi_{P\cap Q}(b)=1.$ Then $b\in P\cap Q.$ So $P\cap Q\supseteq P\Gamma Q\cap Q\Gamma P.$ Hence $P\cap Q=P\Gamma Q\cap Q\Gamma P$. Consequently, $S$ is both regular and intra-regular.
\end{proof}



\begin{thebibliography}{9}
\bibitem{Bh1} S.K. Bhakat and P. Das; {\it On the definition of a fuzzy subgroup,} Fuzzy Sets Syst., 51 (1992) 235-241.\vspace{-.1 in}

\bibitem{Bh2} S.K. Bhakat and P. Das; {\it $(\in ,\in \vee q)$-fuzzy subgroup,} Fuzzy Sets Syst., 80 (1996) 359-368.\vspace{-.1 in}

\bibitem{Ch1} S. Chattopadhyay; {\it Right inverse $\Gamma$-semigroup.} Bull. Cal. Math. Soc., 93 (2001) 435-442.\vspace{-.1 in}

\bibitem{Ch2} S. Chattopadhyay; {\it  Right orthodox $\Gamma$-semigroup,} Southeast Asian Bull. Math., 29 (2005) 23-30. \vspace{-.1 in}

\bibitem{Chin} R. Chinram,; {\it On quasi-$\Gamma$-ideals in $\Gamma$-semigroups,} Science Asia, 32 (2006) 351-353.\vspace{-.1 in}

\bibitem{D1} T.K. Dutta and N.C. Adhikari; {\it On $\Gamma$-semigroup with the right and left unities,} Soochow J. Math., 19 (4) (1993) 461-474.\vspace{-.1 in}

\bibitem{D3} T.K. Dutta and N.C. Adhikari; {\it On prime radical of $\Gamma$-semigroup,} Bull. Cal. Math. Soc., 86 (5) (1994) 437-444.\vspace{-.1 in}

\bibitem{D2} T.K. Dutta, S.K. Sardar and S.K. Majumder; {\it Fuzzy ideal extensions of $\Gamma $-semigroups,} International Mathematical Forum, 4(41) (2009) 2093-2100.\vspace{-.1 in}

\bibitem{D4} T.K. Dutta, S.K. Sardar and S.K. Majumder; {\it Fuzzy ideal extensions of $\Gamma $-semigroups via its operator semigroups,} Int. J. Contemp. Math. Sciences, 4(30)(2009) 1455-1463.\vspace{-.1 in}

\bibitem{H1} K. Hila; {\it On regular, semiprime and quasi-reflexive $\Gamma$-semigroup and minimal quasi-ideals,} Lobachevski J. Math., 29 (2008) 141-152.\vspace{-.1 in}

\bibitem{H2} K. Hila; {\it On some classes of le-$\Gamma$-semigroup and minimal quasi-ideals,} Algebras Groups Geom. 24 (2007) 485-495.\vspace{-.1 in}

\bibitem{J} Y.B. Jun and S.Z. Song; {\it Generalized fuzzy interior ideals in semigroups,} Inform Sci. 176 (2006) 3079-3093.\vspace{-.1 in}

\bibitem{K1} N. Kuroki; {\it On fuzzy semigroups,} Inform Sci., 53 (1991) 203-236.\vspace{-.1 in}

\bibitem{K2} N. Kuroki; {\it On fuzzy ideals and fuzzy bi-ideals in semigroups,} Fuzzy Sets Syst., 5 (1981) 203-215.\vspace{-.1 in}

\bibitem{K3} N. Kuroki; {\it Fuzzy semiprime ideals in semigroups,} Fuzzy Sets Syst., 158 (2004) 277-288.\vspace{-.1 in}

\bibitem{Mu} V. Murali; {\it Fuzzy points of equivalent fuzzy subsets,} Inform. Sci., 158(2004) 277-288.\vspace{-.1 in}

\bibitem{Pu} P.M. Pu and Y.M. Liu; {\it Fuzzy topology I, neighbourhood structure of a fuzzy point and Moore-Smith convergence,} J. Math. Anal. Appl., 76 (1980) 571-599.\vspace{-.1 in}

\bibitem{R} A. Rosenfeld; {\it Fuzzy groups,} J. Math. Anal. Appl., 35 (1971) 512-517. \vspace{-.1 in}

\bibitem{S} N.K. Saha; {\it On $\Gamma$-semigroup II,} Bull. Cal. Math. Soc., 79 (1987) 331-335.\vspace{-.1 in}

\bibitem{S1} S.K. Sardar and S.K. Majumder; {\it On fuzzy ideals in $\Gamma $-semigroups,} International Journal of Algebra, 3(16) (2009)775-784.\vspace{-.1 in}

\bibitem{S2} S.K. Sardar, S.K. Majumder and D. Mandal; {\it A note on characterization of prime ideals of $\Gamma $-semigroups in terms of fuzzy subsets,} Int. J. Contemp. Math. Sciences, 4(30)(2009) 1465-1472.\vspace{-.1 in}

\bibitem{S3} S.K. Sardar and S.K. Majumder; {\it A note on characterization of semiprime ideals of $\Gamma $-semigroups in terms of fuzzy subsets,} International Journal of Pure and Applied Mathematics, 3(56)(2009)451-457.\vspace{-.1 in}

\bibitem{S4} S.K. Sardar, B. Davvaz and S.K. Majumder; {\it A study on fuzzy interior ideals of $\Gamma $-semigroups,} (Accepted), Computers and Mathematics with applications.\vspace{-.1 in}

\bibitem{S5} S.K. Sardar, S.K. Majumder and S. Kayal; {\it On fuzzy bi-ideals and fuzzy quasi ideals in $\Gamma $-semigroups,} (Communicated).\vspace{-.1 in}

\bibitem{SC} M.K. Sen and S. Chattopadhyay; {\it Semidirect product of a monoid and a $\Gamma$-semigroup,} East-West J. Math., 6 (2004) 131-138.\vspace{-.1 in}

\bibitem{SSa} M.K. Sen and N.K. Saha; {\it Orthodox $\Gamma$-semigroups,} Internat. J. Math. Math. Sci., 13 (1990) 527-534.\vspace{-.1 in}

\bibitem{SS} M.K. Sen and N.K. Saha; {\it On $\Gamma$-semigroup I,} Bull. Cal. Math. Soc., 78 (1986) 180-186.\vspace{-.1 in}

\bibitem{Se} A. Seth; {\it $\Gamma$-group congruences on regular $\Gamma$-semigroups,} Internat. J. Math. Math. Sci., 15 (1992)103-106.\vspace{-.1 in}

\bibitem{Yu} X. Yuan, C. Zhang and Y. Ren; {\it Generalized fuzzy groups and many-valued implications,} Fuzzy Sets Systs., 138 (2003)205-211.\vspace{-.1 in}

\bibitem{Yu1} Y. Yunqiang and D. Xu; {\it The $(\in ,\in \vee q)$-fuzzy subsemigroups and ideals of an $(\in ,\in \vee q)$-fuzzy semigroup,} Southeast Asian Bull. of math., 33 (2009)391-400.\vspace{-.1 in}

\bibitem{Z} L.A. Zadeh; {\it Fuzzy Sets,} Information and Control, 8 (1965) 338-353.\vspace{-.1 in}
\end{thebibliography}
\end{document}